\def\NAT@def@citea{\def\@citea{\NAT@separator}}
\theoremstyle{plain}
\newtheorem{theorem}{Theorem}[section]
\newtheorem{corollary}[theorem]{Corollary}
\newtheorem{proposition}[theorem]{Proposition}
\theoremstyle{definition}
\theoremstyle{remark}
\newtheorem{remark}{Remark}
\newcommand{\norm}[1]{\left\Vert#1\right\Vert}
\newcommand{\scal}[1]{\left<#1\right>}
\newcommand{\R}{\mathbb{R}}      
\newcommand{\C}{\mathbb{C}}
\newcommand{\bz}{\overline{z}} \newcommand{\bw}{\overline{w}} \newcommand{\bxi}{\overline{\xi}}
\newcommand{\magn}{\nu}
\newcommand{\gauss}{\mu}
\newcommand{\Lnur}{L^{2,\magn}(\R)}
\newcommand{\Lnuc}{L^{2,\magn}(\C)}
\title[Non-trivial 1d and 2d integral transforms of Segal-Bargmann type]
 {Non-trivial 1d and 2d integral transforms of Segal-Bargmann type } 
\author{Abdelhadi Benahmadi} \email{abdelhadi.benahmadi@gmail.com / ag@fsr.ac.ma}
\author{Allal Ghanmi}     
 \address{CeReMAR, A.G.S., L.A.M.A., Department of Mathematics, P.O. Box 1014,
             Faculty of Sciences, Mohammed V University in Rabat, Morocco}
 \thanks{Dedicated to the Memory of Kettani Ghanmi and Mohammed Hamdi}
\begin{document}
\maketitle


\begin{abstract}
Generating functions for the univariate complex Hermite polynomials (UCHP) are employed to introduce some non-trivial one and two-dimensional integral transforms of Segal-Bargmann type in the framework of specific functional Hilbert spaces. The approach used is issued from the coherent states framework.
Basic properties of these transforms are studied. Connection to some special known transforms like Fourier and Wigner transforms is established.
The first transform is a two-dimensional Segal-Bargmann transform whose kernel is related the exponential generating function of the UCHP. The second one connects the so-called generalized Bargmann-Fock spaces (or also true-poly-Fock spaces in the terminology of Vasilevski) that are realized as the $L^2$-eigenspaces of a specific magnetic Schr\"odinger operator.
Its kernel function is related to a Mehler formula of the UCHP.
%
%
 35A22; 44A15;  33C45; 42C05; 42A38; 32A10
\end{abstract}

\section{Notations and motivations}
We denote by $L^{2,\magn}(X)$ the Hilbert space of all square integrable functions on $X=\R,\R^2\simeq\C,\C^2$ with respect to the Gaussian measure $d\lambda_\magn(s) := e^{-\magn |s|^2}d\lambda(s)$, where $d\lambda$ is the Lebesgue measure on $X$ with $d\lambda(s)=dx,dx_zdy_z,dx_zdy_zdx_wdy_w$ for $s=x \in \R$, $s=z=x_z+iy_z\in \C\simeq \R^2$ and $s=(z,w) \in \C^2$, respectively. We denote by $\mathcal{F}^{2,\magn}(X)$ the Bargmann-Fock space constituted of all holomorphic functions on $X$, when $X=\C$ or $X=\C^2$, belonging to  $L^{2,\magn}(X)$,
$$\mathcal{F}^{2,\magn}(X)= \mathcal{H}ol(X) \cap L^{2,\magn}(X).$$

Part of the motivation of the present work comes from the recent investigations in the theory of the UCHP
which have played a crucial rule in obtaining our transforms. Such polynomials have been employed in many branches of Mathematics, including Markov process, the nonlinear analysis of travelling wave tube amplifiers, signal processing, the singular values of the Cauchy transform,
coherent states theory, combinatorics and the distribution of zeros of the automorphic reproducing kernel function.
 For basic properties on these polynomials we refer to \cite{Gh08JMAA,ABEG2015,IsmailZhang2017,Gh2017Mehler,ElfardiGIS2018}.

In fact, we employ generating functions for the UCHP in order to obtain non-trivial one and two-dimensional integral transforms of Segal-Bargmann type in the framework of specific functional Hilbert spaces and study their basic properties, including the connection to some special known transforms like Fourier and Wigner transforms.

\section{Introduction} \label{s1}

The one-dimensional Segal-Bargmann transform \cite{Bargmann1961,Segal62} establishes an isometric isomorphism from the Hilbert space $\Lnur$ onto the Bargmann-Fock space $\mathcal{F}^{2,\magn}(\C)$. 
 It is specified with the formula (with a slightly different convention from the classical one)
  \begin{eqnarray}\label{IntTransf20}
\mathcal{B}^{1,\magn}(\phi)(z) :=
\left(\frac{\magn}{\pi}\right)^{3/4} \int_{\R} \rho_0^\magn \left(x  -  \frac{z}{\sqrt{2}}\right) \phi(x)  dx,
 \end{eqnarray}
 where $\rho_0^\magn(\xi)=e^{-\magn \xi^2}$ is the analytic continuation to $\C$ of the standard Gaussian density on $\R$.
 Such transform has found many applications in quantum optics, in signal processing and in harmonic analysis on phase space. A nice overview of its properties and applications can be found in \cite{Folland1989,Neretin1972}.
 Many generalizations have been considered in the literature including the Hall's transforms for compact Lie groups \cite{Hall1994,GrossMalliavin1996,MouraoNQ2017} as well as the so-called generalized Segal-Bargmann transform of level $n$ (see \cite{Mouayn2011} where there $\magn=1$),
 \begin{eqnarray}\label{IntTransf2}
\mathcal{B}^{1,\magn}_n(\phi)(z) := \left(\frac{\magn}{\pi}\right)^{3/4}  \left(\frac{1}{2^n\magn^n n! }\right)^{1/2}
\int_{\R} \rho_0^\magn \left(x  -  \frac{z}{\sqrt{2}}\right) H_n^{\magn} \left( \frac{z + \bz }{\sqrt{2}}  -  x   \right) \phi(x)  dx ,
 \end{eqnarray}
where $H_n^\magn(x) $; $\magn >0$, is the $n^{\mbox{th}}$ rescaled real Hermite polynomial defined by
\begin{eqnarray} \label{wrHn}
H_n^\magn(x) := (-1)^n e^{{\magn}x^2} \frac{d^n}{dx^n}\left(e^{-\magn x^2}\right),
\end{eqnarray}
 so that one recovers the standard Segal-Bargmann transform in \eqref{IntTransf20} when taking $n=0$.
Up to a multiplicative constant, the transform $\mathcal{B}^{1,\magn}_n$ coincides with the isometric operator considered by Vasilevski and linking the space of square integrable functions on the real line with the so-called true-poly-Fock spaces \cite[Theorem 2.5]{Vasilevski2000} (see also \cite{Abreu2010,AbreuGossonMouayn2015}).
 The considered true-poly-Fock spaces can in fact be realized as the $L^2$-eigenspaces
 \begin{eqnarray}\label{GBFn}
\mathcal{F}^{2,\magn}_n(\C)= \{ f\in \Lnuc ; \quad \Delta_\magn f = n f  \}
\end{eqnarray}
of the magnetic Schr\"odinger operator
 $$
\Delta_\magn := - \dfrac{\partial^2}{\partial z\partial\bz  } + \magn \bz  \dfrac{\partial}{\partial \bz }.
$$
The kernel function in \eqref{IntTransf2} is in fact the exponential generating function involving the product of the real $H_m^\magn(x) $ in \eqref{wrHn} and the univariate complex Hermite polynomials (UCHP)
 \begin{eqnarray}\label{gchpmu}
         H_{m,n}^\magn  (z,\bz )=(-1)^{m+n}e^{\magn  z\bz }\dfrac{\partial ^{m+n}}{\partial \bz^{m} \partial z^{n}} \left(e^{-\magn  z \bz }\right).
 \end{eqnarray}
We have to reconsider this in Section 2 and provide a direct and simpler proof of a general form of such exponential generating function (Theorem \eqref{thm:bilgenfct}).
The incorporation of the parameter $\magn $ is fairly interesting for its physical meaning. In fact, it can be interpreted as the magnitude of a constant magnetic field applied perpendicularly to the Euclidean plane.

 The standard Hilbert space $\Lnur$ on the real line can also be connected to the two-dimensional Bargmann-Fock space $\mathcal{F}^{2,\magn}(\C^2)$
 by considering the special one-to-one transform
 $$(\mathcal{G}^\nu\varphi)(z,w) = \left(\frac{\nu}{\pi}\right)^{\frac{1}{2}} \mathcal{C}_\psi (\mathcal{B}^{1,\magn}\varphi)(z,w)$$
  obtained as the composition operator $\mathcal{C}_\psi f = f\circ \psi$ of the one-dimensional Segal-Bargmann transform $\mathcal{B}^{1,\magn}$ with the specific symbol $ \psi(z,w) = \frac{z+iw}{\sqrt{2}}.$
  Its image is characterized as
\begin{equation}\label{Image}
\mathcal{G}^\nu (\Lnur) = \left\{f\in \mathcal{F}^{2,\nu}(\C^2); \, \left(\frac{\partial}{\partial z} + i \frac{\partial}{\partial w}\right) f =0 \right\}=: E^{2,\magn}_{+}(\C^2),
\end{equation}
which can also be identified to the space of slice (left) regular functions on the quaternions leaving invariant the slice $\C_i\simeq \C$ (see \cite{DG2018b} for details). Moreover, this transform can be realized as $\mathcal{G}^\nu = \mathcal{B}^{2,\magn}\circ \mathcal{B}^{1,\magn}$, where $\mathcal{B}^{1,\magn}$ is as in \eqref{IntTransf20} and $\mathcal{B}^{2,\magn}$ is the two-dimensional Segal-Bargmann transform given by
\begin{eqnarray} \label{2dSBT}
\mathcal{B}^{2,\magn} \psi (z,w)= \left(\frac{\magn}{\pi}\right)^{\frac{3}{2}}
  \int_{\mathbb{R}^2} \rho_0^\magn \left(x  -  \frac{z}{\sqrt{2}}\right)\rho_0^\magn \left(y  -  \frac{w}{\sqrt{2}}\right)
   \psi(x,y)dxdy,
   \end{eqnarray}
and makes the quantum mechanical configuration space $\Lnuc$ unitarily isomorphic to the phase space $\mathcal{F}^{2,\magn}(\C^2)$. Its kernel function is the product of two copies of the kernel function of the  one-dimensional Segal-Bargmann transform $\mathcal{B}^{1,\magn} $ and therefore to the generating function of the product $H_m^\magn(x)H_n^\magn(y)$.

\section{Statement of main results} \label{s2}

In the present paper, we introduce two integral transforms in the framework of specific functional Hilbert spaces by means of the generating functions of the UCHP in \eqref{gchpmu}. We study their basic properties and identify their images.
 The first one is a non-trivial two-dimensional integral transform of Segal-Bargmann type. It is defined by
   \begin{equation}\label{IntTransf1}
\mathcal{T}^\magn(\psi)(z,w)
:= \left(\frac{\magn}{\pi}\right)^{3/2} \int_{\C}  e^{-\magn(z-\xi)(w-\overline{\xi})}  \psi(\xi) d\lambda(\xi).
  \end{equation}

\begin{theorem}\label{thm:IntTransf1}
The integral operator $\mathcal{T}^\magn$ in \eqref{IntTransf1} defines an isometric isomorphism from $\Lnuc$ onto the two-dimensional Bargmann-Fock space $\mathcal{F}^{2,\magn}(\C^2)$.
Its inverse is given by
 \begin{eqnarray}\label{InverseT}
\left( \mathcal{T}^\magn \right)^{-1}(\varphi )(\xi) = \left(\frac{\magn}{\pi}\right)^{3/2}
 \int_{\C^2}  e^{-\magn(|z|^2+|w|^2)+\magn(\bz\xi+ \bw\bxi-\bz\bw)}  \varphi(z,w) d\lambda(z,w).
  \end{eqnarray}
Moreover, we have
 \begin{eqnarray}\label{ActionIntTransf1}
 \int_{\C}  e^{-\magn(z-\xi)(w-\overline{\xi})}  H_{m,n}^{\magn }(\xi,\overline{\xi}) d\lambda(\xi)
= \left(\frac{\pi}{\magn}\right) \magn ^{m+n} z^m w^n.
 \end{eqnarray}
\end{theorem}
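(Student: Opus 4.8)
The plan is to reduce all three assertions to the single action identity \eqref{ActionIntTransf1}; once this is available, the isometry, the surjectivity and the inverse formula follow by standard Hilbert-space arguments resting on the facts that the UCHP $H_{m,n}^{\magn}$ form a complete orthogonal system in $\Lnuc$ and the monomials $z^mw^n$ form one in $\mathcal{F}^{2,\magn}(\C^2)$.

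First I would establish \eqref{ActionIntTransf1}. The key observation is that the kernel factorizes through the exponential generating function of the UCHP: expanding $(z-\xi)(w-\bxi) = zw - z\bxi - w\xi + |\xi|^2$ gives
\[
e^{-\magn(z-\xi)(w-\bxi)} = e^{-\magn|\xi|^2}\, e^{\magn(w\xi + z\bxi - wz)} = e^{-\magn|\xi|^2}\sum_{p,q\geq 0}\frac{w^{p}z^{q}}{p!\,q!}H_{p,q}^{\magn}(\xi,\bxi),
\]
where the last equality is the generating function of Theorem~\ref{thm:bilgenfct} evaluated at $(s,t)=(w,z)$. Inserting this expansion into the integral in \eqref{ActionIntTransf1} and integrating term by term against $H_{m,n}^{\magn}(\xi,\bxi)$, I would invoke the orthogonality relation $\int_{\C}H_{p,q}^{\magn}\overline{H_{a,b}^{\magn}}\,e^{-\magn|\xi|^2}d\lambda(\xi) = \tfrac{\pi}{\magn}\magn^{p+q}p!\,q!\,\delta_{pa}\delta_{qb}$ together with $\overline{H_{a,b}^{\magn}} = H_{b,a}^{\magn}$. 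Only the term $(p,q)=(n,m)$ survives, and it produces exactly $\tfrac{\pi}{\magn}\magn^{m+n}z^mw^n$. The main obstacle here is rigorous: one must justify the interchange of the infinite sum with the integral, which I would handle by an absolute-convergence majorant for the generating series (or, if one prefers to avoid it entirely, by a direct Rodrigues-formula computation with repeated integration by parts in $\partial_z,\partial_{\bz}$).

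Next I would prove the isometry and surjectivity. Since the kernel $e^{-\magn(z-\xi)(w-\bxi)}$ is holomorphic in $(z,w)$, differentiation under the integral sign shows $\mathcal{T}^{\magn}\psi \in \mathcal{H}ol(\C^2)$; and \eqref{ActionIntTransf1} gives
\[
\mathcal{T}^{\magn}(H_{m,n}^{\magn})(z,w) = \left(\frac{\magn}{\pi}\right)^{1/2}\magn^{m+n}z^mw^n.
\]
A direct Gaussian computation yields $\|z^mw^n\|_{\mathcal{F}^{2,\magn}(\C^2)}^2 = \pi^2 m!\,n!\,\magn^{-(m+n+2)}$, whence $\|\mathcal{T}^{\magn}H_{m,n}^{\magn}\|_{\mathcal{F}^{2,\magn}(\C^2)}^2 = \pi\,\magn^{m+n-1}m!\,n! = \|H_{m,n}^{\magn}\|_{\Lnuc}^2$. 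Thus $\mathcal{T}^{\magn}$ maps the orthogonal basis $\{H_{m,n}^{\magn}\}$ of $\Lnuc$ onto the orthogonal family of rescaled monomials, preserving norms and orthogonality; since the monomials $z^mw^n$ span $\mathcal{F}^{2,\magn}(\C^2)$ densely, $\mathcal{T}^{\magn}$ sends an orthonormal basis to an orthonormal basis and therefore extends to an isometric isomorphism of $\Lnuc$ onto the whole of $\mathcal{F}^{2,\magn}(\C^2)$.

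Finally, for the inverse formula \eqref{InverseT} I would use that a unitary operator satisfies $(\mathcal{T}^{\magn})^{-1} = (\mathcal{T}^{\magn})^{*}$ and compute the adjoint directly. Writing out $\langle \mathcal{T}^{\magn}\psi,\varphi\rangle_{\mathcal{F}^{2,\magn}(\C^2)}$ and applying Fubini, the adjoint acquires the conjugated kernel $\overline{e^{-\magn(z-\xi)(w-\bxi)}} = e^{-\magn(\bz-\bxi)(\bw-\xi)}$, and the two Gaussian weights $e^{\magn|\xi|^2}$ and $e^{-\magn|\xi|^2}$ arising from the measures cancel exactly; after simplifying $-\magn(\bz-\bxi)(\bw-\xi) = -\magn\bz\bw + \magn\bz\xi + \magn\bw\bxi - \magn|\xi|^2$ one lands precisely on the right-hand side of \eqref{InverseT}. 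I expect the bookkeeping of conjugates and Gaussian factors in this last step to be the only delicate point, the genuine substance of the theorem residing in \eqref{ActionIntTransf1}.
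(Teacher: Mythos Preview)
Your proof is correct and follows essentially the same route as the paper: both identify the kernel of $\mathcal{T}^\magn$ with the double exponential generating function of the UCHP and use the resulting basis-to-basis correspondence $H_{m,n}^{\magn}\mapsto z^m w^n$ (with matching norms) to obtain the isometric isomorphism and its inverse---the paper simply packages these steps through the abstract coherent-states formalism of Section~\ref{s3}, whereas you carry them out explicitly. One minor correction: the generating function you invoke is not Theorem~\ref{thm:bilgenfct} (which concerns the product $H_m^\mu(x)\overline{H_{m,n}^{\magn}}$) but rather the identity~\eqref{genFctgHermite}.
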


As immediate consequence of \eqref{ActionIntTransf1}, we get
\begin{corollary}
The identity
\begin{eqnarray}\label{ident}
\int_{\C}  e^{-\magn|\xi|^2}  H_{m,n}^{\magn }(\xi,\overline{\xi}) d\lambda(\xi) =0
 \end{eqnarray}
 holds for every integers $m,n$ such that $mn\geq 1$.
\end{corollary}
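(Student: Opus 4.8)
The plan is to obtain \eqref{ident} as a direct specialization of the identity \eqref{ActionIntTransf1} established in Theorem \ref{thm:IntTransf1}. First I would set $z=w=0$ in \eqref{ActionIntTransf1}. On the kernel side the quadratic exponent collapses, since $(z-\xi)(w-\bxi)\big|_{z=w=0} = (-\xi)(-\bxi) = \xi\bxi = |\xi|^2$, so that $e^{-\magn(z-\xi)(w-\bxi)}$ becomes exactly $e^{-\magn|\xi|^2}$. Consequently the left-hand side of \eqref{ActionIntTransf1} turns precisely into the integral $\int_{\C} e^{-\magn|\xi|^2} H_{m,n}^{\magn}(\xi,\bxi)\, d\lambda(\xi)$ appearing in \eqref{ident}.

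Next I would evaluate the right-hand side of \eqref{ActionIntTransf1} at the same point, namely the monomial $(\pi/\magn)\,\magn^{m+n} z^m w^n$ at $z=w=0$. The hypothesis $mn\geq 1$ forces both $m\geq 1$ and $n\geq 1$, so $z^m w^n$ vanishes at the origin and the whole right-hand side equals $0$. Comparing the two sides then yields \eqref{ident} at once.

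The only point requiring a word of justification is the legitimacy of substituting $z=w=0$, which is licit because \eqref{ActionIntTransf1} is an equality of functions of $(z,w)$ valid on all of $\C^2$: its right-hand side is a genuine monomial, while its left-hand side is, up to a multiplicative constant, the transform $\mathcal{T}^{\magn}$ evaluated at $H_{m,n}^{\magn}$, which by Theorem \ref{thm:IntTransf1} lies in $\mathcal{F}^{2,\magn}(\C^2)$ and is in particular holomorphic, hence continuous, in $(z,w)$. Evaluation at a single point is thus unproblematic.

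There is essentially no obstacle here; the statement is an immediate consequence of the theorem, which is exactly why it is phrased as a corollary. As a sanity check I would note that the same vanishing can be reached independently from the orthogonality of the complex Hermite polynomials $H_{m,n}^{\magn}$ against the weight $e^{-\magn|\xi|^2}$, upon reading the integrand as $H_{m,n}^{\magn}(\xi,\bxi)\,\overline{H_{0,0}^{\magn}(\xi,\bxi)}$ with $H_{0,0}^{\magn}\equiv 1$; that route in fact delivers the conclusion on the wider range $m+n\geq 1$, which is consistent with and slightly stronger than the stated hypothesis $mn\geq 1$.
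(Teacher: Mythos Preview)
Your proposal is correct and matches the paper's approach exactly: the paper derives the corollary as an ``immediate consequence of \eqref{ActionIntTransf1}'', which is precisely the specialization $z=w=0$ you carry out. Your additional remark that orthogonality against $H_{0,0}^{\magn}\equiv 1$ gives the vanishing on the larger range $m+n\geq 1$ is a valid and welcome observation not made in the paper.
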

 We also recover the specific integral representation of the kernel function $e^{\magn zw }$ (the reproducing property),
\begin{equation}\label{IntegFKernel}
    \int_{\C}  e^{-\magn|\xi|^2 + \magn(  z\overline{\xi} + w \xi )}  d\lambda(\xi) = \left(\frac{\pi}{\magn}\right) e^{\magn zw } .
\end{equation}
Taking into account the orthogonal Hilbertian decomposition $ \Lnuc = \bigoplus_{n=0}^{\infty} \mathcal{F}^{2,\magn}_n(\C)$,
a quite natural question arises of whether the image of $\mathcal{F}^{2,\magn}_{n}(\C)$ by the transform $\mathcal{T}^\magn$ can be
characterized. In fact, the following holds making use of \eqref{ActionIntTransf1}.

\begin{proposition}\label{propimage}
We have
\begin{equation}\label{ImageFnT}
\mathcal{T}^\magn(\mathcal{F}^{2,\magn}_{n}(\C)) = \{ f(z,w) = w^n h(z); \, h\in \mathcal{F}^{2,\magn}(\C) \}.
\end{equation}
\end{proposition}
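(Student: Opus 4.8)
The plan is to diagonalize $\mathcal{T}^\magn$ against the natural orthogonal basis of $\mathcal{F}^{2,\magn}_{n}(\C)$ furnished by the univariate complex Hermite polynomials, and then to read the image off from the action formula \eqref{ActionIntTransf1}. The input I rely on is the standard fact that, for each fixed $n$, the family $\{H_{m,n}^{\magn}\}_{m\geq 0}$ (second index frozen) is a complete orthogonal system of the eigenspace $\mathcal{F}^{2,\magn}_{n}(\C)$. This rests on the ladder relations $\partial_{z}H_{m,n}^{\magn}=\magn m\,H_{m-1,n}^{\magn}$ and $\partial_{\bz}H_{m,n}^{\magn}=\magn n\,H_{m,n-1}^{\magn}$, from which a direct computation gives that $H_{m,n}^{\magn}$ is an eigenfunction of $\Delta_\magn$ whose eigenvalue depends only on the second index $n$; combined with the completeness and orthogonality of the full system $\{H_{m,n}^{\magn}\}_{m,n\geq0}$ in $\Lnuc$ and the mutual orthogonality of distinct eigenspaces, freezing $n$ yields precisely a basis of $\mathcal{F}^{2,\magn}_{n}(\C)$, in accordance with the decomposition $\Lnuc=\bigoplus_{n}\mathcal{F}^{2,\magn}_{n}(\C)$ recalled above.

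Next I apply $\mathcal{T}^\magn$ to these basis vectors. Formula \eqref{ActionIntTransf1} together with the normalizing constant in \eqref{IntTransf1} gives immediately
\begin{equation*}
\mathcal{T}^\magn(H_{m,n}^{\magn})(z,w)=\left(\frac{\magn}{\pi}\right)^{1/2}\magn^{\,m+n}\,w^{n}z^{m},
\end{equation*}
so that, up to nonzero constants, the images of the basis of $\mathcal{F}^{2,\magn}_{n}(\C)$ are exactly the monomials $w^{n}z^{m}$, $m\geq0$. Since $\mathcal{T}^\magn$ is an isometric isomorphism onto $\mathcal{F}^{2,\magn}(\C^{2})$ by Theorem \ref{thm:IntTransf1}, it is in particular unitary and therefore maps the closed linear span of $\{H_{m,n}^{\magn}\}_{m\geq0}$ onto the closed linear span $V_{n}$ of $\{w^{n}z^{m}\}_{m\geq0}$ inside $\mathcal{F}^{2,\magn}(\C^{2})$. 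It then remains only to identify $V_{n}$ with the right-hand side of \eqref{ImageFnT}.

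For this identification I use the product structure of the Gaussian weight $e^{-\magn(|z|^{2}+|w|^{2})}$: by Fubini the monomials $z^{j}w^{k}$, $j,k\geq0$, form an orthogonal basis of $\mathcal{F}^{2,\magn}(\C^{2})$ with $\|z^{j}w^{k}\|^{2}_{\mathcal{F}^{2,\magn}(\C^{2})}=\|z^{j}\|^{2}_{\mathcal{F}^{2,\magn}(\C)}\|w^{k}\|^{2}_{\mathcal{F}^{2,\magn}(\C)}$. Writing an element of $V_{n}$ as $\sum_{m}a_{m}w^{n}z^{m}=w^{n}h(z)$ with $h(z)=\sum_{m}a_{m}z^{m}$, this factorization gives $\|w^{n}h\|^{2}_{\mathcal{F}^{2,\magn}(\C^{2})}=\|w^{n}\|^{2}_{\mathcal{F}^{2,\magn}(\C)}\|h\|^{2}_{\mathcal{F}^{2,\magn}(\C)}$, so that $w^{n}h\in\mathcal{F}^{2,\magn}(\C^{2})$ if and only if $h\in\mathcal{F}^{2,\magn}(\C)$. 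Hence $V_{n}=\{w^{n}h(z);\ h\in\mathcal{F}^{2,\magn}(\C)\}$, which is exactly \eqref{ImageFnT}.

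The genuinely delicate point is this last identification, i.e.\ passing from the dense span of the monomials $w^{n}z^{m}$ to its closure and checking that one obtains precisely the functions $w^{n}h(z)$ with $h$ in the one-variable Fock space and nothing more. The norm factorization above is what controls this, as it shows that square-summability of $(a_{m})$ against the one-variable Fock weights is equivalent to $w^{n}h$ being square-integrable against the two-variable Gaussian; holomorphy of the limit in $(z,w)$ is then automatic, since norm convergence in the reproducing-kernel space $\mathcal{F}^{2,\magn}(\C^{2})$ forces local uniform convergence. Everything else in the argument is routine bookkeeping with the constants.
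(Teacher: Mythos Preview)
Your argument is correct and is precisely the route the paper intends: the paper does not spell out a proof of Proposition~\ref{propimage} but merely states that it ``holds making use of \eqref{ActionIntTransf1}'', and your write-up is exactly that argument carried out in full (use $\{H_{m,n}^{\magn}\}_{m\ge 0}$ as an orthogonal basis of $\mathcal{F}^{2,\magn}_{n}(\C)$, apply the action formula \eqref{ActionIntTransf1} to get the monomials $w^{n}z^{m}$, invoke the isometry of $\mathcal{T}^{\magn}$ from Theorem~\ref{thm:IntTransf1}, and identify the closed span via the tensor-product norm factorization). Nothing needs to be changed.
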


Interesting integral transforms can be derived as special cases of $\mathcal{T}^\magn$, including those obtained by restriction.
For example, one can consider the restriction operators to the diagonal and the "anti-diagonal" of $\C^2$ or also to $\C\times \{0\}$. They are defined respectively by $\mathcal{R}^\magn_+ \psi  (z):=\mathcal{T}^\magn \psi (z,z)$, $\mathcal{R}^\magn_- \psi  (z):=\mathcal{T}^\magn \psi (z,\bz)$ and $\mathcal{T}^\magn \psi (z,0)$.
The following result shows in particular that the operators $\mathcal{R}^\magn_+ $ and $\left( \mathcal{T}^\magn \right)^{-1} \Gamma_{-i} \mathcal{T}^\magn$, with $\Gamma_{-i}\psi (z,w) := \psi (-i z,-i w)$, reduce further to the shifted Fourier transform $\widetilde{\mathcal{F}}^\magn$ defined on $\Lnuc$ by
   \begin{eqnarray}\label{Fourier}
   \widetilde{\mathcal{F}}^\magn (\varphi) (\xi)
   := \left(\frac{\magn }{2\pi}\right)  \int_{\C}  e^{\frac{\nu}{2}(\xi-iu)(\overline{\xi}-i\overline{u})} \varphi (u)  d\lambda(u).
  \end{eqnarray}
which is the $\Lnuc$ version of the standard Fourier transform $\mathcal{F}^\magn$ on $L^{2,0}(\C)$ with
$\widetilde{\mathcal{F}}^\magn  =  \mathcal{M}_{\nu/2}  \mathcal{F}^\magn \mathcal{M}_{-\nu/2} $. Here $ \mathcal{M}_\alpha$ denotes the multiplication operator (ground state transform) $\mathcal{M}_\alpha f:=e^{-\alpha\vert{z}\vert^2}f$.

 \begin{theorem}\label{thm:IntTransf1b}
We have the formulas $\Gamma_{-i} \mathcal{R}^\magn_+  = \widetilde{\mathcal{F}}^{\nu/2}$ and
   \begin{eqnarray}\label{ConnFourier}
   \left( \mathcal{T}^\magn \right)^{-1} \Gamma_{-i} \mathcal{T}^\magn = \widetilde{\mathcal{F}}^\magn .
    \end{eqnarray}
\end{theorem}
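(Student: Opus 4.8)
The plan is to reduce both identities to a verification on the complete orthogonal system $\{H_{m,n}^\magn(\xi,\overline{\xi})\}_{m,n\ge 0}$ of $\Lnuc$, exploiting the already-established action \eqref{ActionIntTransf1}. Concretely, \eqref{ActionIntTransf1} yields $\mathcal{T}^\magn(H_{m,n}^\magn)(z,w) = (\magn/\pi)^{1/2}\magn^{m+n}z^m w^n$, so that $\mathcal{T}^\magn$ conjugates the complex Hermite basis into the monomial basis $z^m w^n$ of $\mathcal{F}^{2,\magn}(\C^2)$. Since $\Gamma_{-i}$ is nothing but the substitution $(z,w)\mapsto(-iz,-iw)$, it is diagonal on monomials, $\Gamma_{-i}(z^m w^n) = (-i)^{m+n}z^m w^n$. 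This is the computational heart of the argument: all the operators in sight are simultaneously diagonalized by the $H_{m,n}^\magn$.

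For the second identity, combining these observations gives $(\mathcal{T}^\magn)^{-1}\Gamma_{-i}\mathcal{T}^\magn(H_{m,n}^\magn) = (-i)^{m+n}H_{m,n}^\magn$, so the left-hand operator acts on each $H_{m,n}^\magn$ by the scalar $(-i)^{m+n}$. It then remains to prove the companion eigenvalue relation $\widetilde{\mathcal{F}}^\magn(H_{m,n}^\magn) = (-i)^{m+n}H_{m,n}^\magn$, whereupon equality of the two bounded operators follows from completeness of $\{H_{m,n}^\magn\}$ in $\Lnuc$. For the first identity I would run the same scheme, replacing evaluation on $\C^2$ by the diagonal restriction entering $\mathcal{R}^\magn_+$ and comparing against the eigen-action of $\widetilde{\mathcal{F}}^{\nu/2}$; here extra care is needed to track the half-parameter $\nu/2$ and the normalizing constant $(\magn/\pi)^{1/2}$.

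The main obstacle is thus the eigenvalue relation for the shifted Fourier transform, i.e.\ showing that the complex Hermite polynomials are eigenfunctions of $\widetilde{\mathcal{F}}^\magn$ with eigenvalue $(-i)^{m+n}$. I would approach this through the factorization $\widetilde{\mathcal{F}}^\magn = \mathcal{M}_{\nu/2}\mathcal{F}^\magn\mathcal{M}_{-\nu/2}$ recorded in the text: the ground-state conjugation turns $H_{m,n}^\magn$ into the genuine two-dimensional Hermite function $e^{-\frac{\nu}{2}|\xi|^2}H_{m,n}^\magn(\xi,\overline{\xi})$, and these are the classical eigenfunctions of the flat Fourier transform $\mathcal{F}^\magn$ on $L^{2,0}(\C)$, with eigenvalue $(-i)^{m+n}$ dictated by the total degree $m+n$. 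One must handle with some care the fact that $\mathcal{M}_{-\nu/2}$ multiplies by a growing Gaussian, so it is cleanest either to argue on the dense linear span of the $H_{m,n}^\magn$, where everything is a convergent Gaussian integral, or to reduce to the one-dimensional Hermite--Fourier relation via the real--complex dictionary linking $H_{m,n}^\magn$ to products of rescaled real Hermite polynomials.

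As an alternative, fully self-contained route that sidesteps the eigenfunction analysis, I would compose integral kernels directly: insert the kernel of $\mathcal{T}^\magn$ from \eqref{IntTransf1}, apply the substitution $\Gamma_{-i}$, and then the inverse kernel from \eqref{InverseT}, and carry out the resulting Gaussian integral over $\C^2$ by completing the square in the two complex variables. The expectation is that this collapses to the kernel $\frac{\magn}{2\pi}e^{\frac{\magn}{2}(\eta-i\xi)(\overline{\eta}-i\overline{\xi})}$ of $\widetilde{\mathcal{F}}^\magn$ appearing in \eqref{Fourier}; the first identity then follows from the same computation after restricting one integration to the diagonal. The only genuine work on this route is the two-variable Gaussian integration, which is routine but bookkeeping-heavy.
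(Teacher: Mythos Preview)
Your alternative route—composing the explicit kernels from \eqref{IntTransf1} and \eqref{InverseT}, applying the substitution $(z,w)\mapsto(-iz,-iw)$, and collapsing the Gaussian integral over $\C^2$—is exactly the paper's proof of \eqref{ConnFourier}; the paper carries out the $(z,w)$-integration via two applications of the reproducing identity \eqref{IntegFKernel} rather than by completing the square, and for the first identity it simply compares kernels after setting $w=z$ and replacing $z$ by $-iz$. Your primary basis-verification route is genuinely different and is sound for \eqref{ConnFourier}, but note that the logical flow is inverted relative to the paper: the eigenvalue relation you need as input is recorded as Proposition~\ref{propFourier} and is \emph{deduced} there from Theorem~\ref{thm:IntTransf1b}, so you must supply an independent proof of it (your suggestion via the factorisation $\widetilde{\mathcal{F}}^\magn=\mathcal{M}_{\magn/2}\mathcal{F}^\magn\mathcal{M}_{-\magn/2}$ and the classical Hermite--Fourier relation is adequate). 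For the first identity the basis approach is awkward, since the half-parameter transform $\widetilde{\mathcal{F}}^{\magn/2}$ does not diagonalise on the $\magn$-Hermite polynomials and $\mathcal{R}^\magn_+$ has no natural Hilbert-space target on which to run a completeness argument; there the direct kernel comparison is really the efficient route. The trade-off overall: the paper's kernel computation is short and self-contained, while your spectral route makes the connection with Proposition~\ref{propFourier} transparent at the cost of importing an outside ingredient.
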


The identity \eqref{ConnFourier} shows that the shifted Fourier transform $\widetilde{\mathcal{F}}^\magn$ in \eqref{Fourier} is imbedded in the reducible representation of the unitary group $U(1):=\{\theta \in \C; \, |\theta |=1 \}$ on the two-dimensional Bargmann-Fock space $\mathcal{F}^{2,\magn}(\C^2)$ defined by $\Gamma_\theta \psi (z,w) := \psi (\theta z,\theta w)$. Moreover, from \eqref{ActionIntTransf1} and \eqref{ConnFourier}, we can prove the following

  \begin{proposition}\label{propFourier}
The UCHP form an orthogonal eigenfunction basis of $\widetilde{\mathcal{F}}^\magn$ with
 \begin{eqnarray}\label{FourierHermite}
 \widetilde{\mathcal{F}}^\magn (H_{m,n}^{\magn }) = i^{m+n} H_{m,n}^{\magn }.
  \end{eqnarray}
\end{proposition}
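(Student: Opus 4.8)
The plan is to obtain \eqref{FourierHermite} as a direct consequence of the intertwining relation \eqref{ConnFourier}, exploiting the fact that $\mathcal{T}^\magn$ converts the UCHP into monomials. First I would observe that each $H_{m,n}^{\magn}$ is a polynomial, hence lies in $\Lnuc$, so the isometry $\mathcal{T}^\magn$ of Theorem \ref{thm:IntTransf1} applies to it. Dividing the action formula \eqref{ActionIntTransf1} by the normalizing constant in \eqref{IntTransf1} gives immediately
\[
\mathcal{T}^\magn(H_{m,n}^{\magn})(z,w) = \left(\frac{\magn}{\pi}\right)^{1/2}\magn^{m+n}\, z^m w^n =: c_{m,n}\, z^m w^n ,
\]
so that, up to the positive constant $c_{m,n}$, the transform $\mathcal{T}^\magn$ carries the UCHP onto the monomial basis $z^m w^n$ of $\mathcal{F}^{2,\magn}(\C^2)$; equivalently $(\mathcal{T}^\magn)^{-1}(z^m w^n) = c_{m,n}^{-1} H_{m,n}^{\magn}$.

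The key point is then that these monomials are eigenvectors of the scaling operator $\Gamma_{-i}$: by homogeneity of bidegree $(m,n)$,
\[
\Gamma_{-i}\left(z^m w^n\right) = (-iz)^m(-iw)^n = (-i)^{m+n}\, z^m w^n .
\]
Transporting this back through $(\mathcal{T}^\magn)^{-1}$ and invoking \eqref{ConnFourier} then yields
\[
\widetilde{\mathcal{F}}^\magn(H_{m,n}^{\magn}) = (\mathcal{T}^\magn)^{-1}\Gamma_{-i}\mathcal{T}^\magn(H_{m,n}^{\magn}) = (-i)^{m+n} H_{m,n}^{\magn},
\]
where the middle equality uses linearity together with the two displays above (the constant $c_{m,n}$ cancels). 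This is the spectral identity \eqref{FourierHermite}. The orthogonal-basis assertion then needs no extra work: it is the classical fact that $\{H_{m,n}^{\magn}\}_{m,n\ge 0}$ is an orthogonal basis of $\Lnuc$ (see the references on the UCHP in Section \ref{s1}), and since $\widetilde{\mathcal{F}}^\magn$ is unitary — being the conjugate $(\mathcal{T}^\magn)^{-1}\Gamma_{-i}\mathcal{T}^\magn$ of the unitary $\Gamma_{-i}$ — this basis is automatically an orthogonal eigenbasis with unimodular eigenvalues.

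There is no deep obstacle: once \eqref{ActionIntTransf1} and \eqref{ConnFourier} are available the statement is essentially a corollary, and the only manipulations to justify are the termwise application of the bounded operators and the memberships $H_{m,n}^{\magn}\in\Lnuc$, $z^m w^n\in\mathcal{F}^{2,\magn}(\C^2)$. The one point that genuinely requires care is the bookkeeping of the fourth-root-of-unity factor: its exponent is manifestly the total degree $m+n$, but its sign is fixed by the orientation built into the kernel of \eqref{Fourier}. A direct check on $H_{1,0}^{\magn}=\magn z$, for which \eqref{Fourier} gives $\widetilde{\mathcal{F}}^\magn(\magn u)(\xi) = -i\,\magn\xi$, pins this factor to $-i$ in total degree one, i.e. to $(-i)^{m+n}$ in general; I would record this computation explicitly so that the spectral factor in \eqref{FourierHermite} is stated with the sign consistent with the defining convention \eqref{Fourier}.
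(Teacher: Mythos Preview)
Your approach is exactly the one the paper indicates: the sentence preceding Proposition~\ref{propFourier} says only that the result follows ``from \eqref{ActionIntTransf1} and \eqref{ConnFourier}'', and you carry this out correctly by transporting the scaling action $\Gamma_{-i}$ through $\mathcal{T}^\magn$.

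Your bookkeeping is also right, and in fact catches a sign slip in the paper's stated eigenvalue: since $\Gamma_{-i}(z^m w^n)=(-i)^{m+n}z^m w^n$, the intertwining \eqref{ConnFourier} forces $\widetilde{\mathcal{F}}^\magn(H_{m,n}^{\magn})=(-i)^{m+n}H_{m,n}^{\magn}$, not $i^{m+n}$ as printed in \eqref{FourierHermite}. Your sanity check on $H_{1,0}^{\magn}=\magn z$ confirms this under the kernel convention \eqref{Fourier}, so recording that computation, as you propose, is the right way to nail down the sign.
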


Notice also that the integral operator $\mathcal{T}^\magn $ is closely related to the two-dimensional Segal-Bargmann transform in \eqref{2dSBT} as well as to the Wigner transform
\begin{equation}\label{Wignerdf} 
 \mathcal{W}^{\magn}(f)(x, y) = \left( \frac{1}{2\pi}\right)^{1/2} \int_{\R} e^{-i\nu xt} f\left(y+\frac{t}{2},y-\frac{t}{2}\right) dt ; \quad f \in L^{2,0}(\R^2).
 \end{equation}
The transform $ \mathcal{W}^{\magn}$ is connected with the phase space formulation of quantum mechanics and Weyl correspondence \cite{Pool1966,Folland1989,Thangavelu1998}.
We consider the standard action of the group of $2 \times 2$ matrices $ M_{2}(\C) $ defined by
$$ g\cdot (z,w) = (az+bw, cz+dw)    ; \, g = \left(\begin{array}{cc}  a & b \\  c & d \\ \end{array} \right) \in M_{2}(\C) $$
on $\C^2$ that we extend to functions on $\C^2$ by considering $\Gamma_{g} f (z,w) : = f (g\cdot (z,w))$,
and notice that
\begin{equation}\label{gi}
g_i := \left( \begin{array}{cc} 1 & i \\ 1 & -i  \end{array} \right)  \in \sqrt{2} U(2),
\end{equation}
 where $U(2)$ is the subgroup of $2 \times 2$ unitary matrices.
The formula \eqref{ConnIntTransf1Wigner} below is in some how the analogue of
$\mathcal{B}^{2} \mathcal{W} =\Gamma_g \mathcal{B}^{2} $ proved by Shun-Long Luo in \cite[Proposition 2]{Shun-LongLuo1998}.

\begin{theorem}\label{ThmBargmannWigner} We have the identities $ \mathcal{B}^{2,\magn}  = \Gamma_{g_i}\mathcal{T}^\magn$
  and
  \begin{eqnarray}\label{ConnIntTransf1Wigner}
    \mathcal{T}^\magn \mathcal{W}^\nu
= \left( \frac{1}{2\magn}\right)^{1/2}  e^{-\frac{\magn}{4}(z+w)^2} \Gamma_{-ig_{i}}\mathcal{T}^{\frac{\magn}2} .
\end{eqnarray}
  \end{theorem}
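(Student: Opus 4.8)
The plan is to prove the two identities separately: the first, $\mathcal{B}^{2,\magn}=\Gamma_{g_i}\mathcal{T}^\magn$, is an exact change of variables inside the kernel of $\mathcal{T}^\magn$, while the second, \eqref{ConnIntTransf1Wigner}, is an iterated Gaussian integration that I then reassemble into a dilated copy of $\mathcal{T}^{\magn/2}$.

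For $\mathcal{B}^{2,\magn}=\Gamma_{g_i}\mathcal{T}^\magn$ I would start from $\Gamma_{g_i}\mathcal{T}^\magn\psi(z,w)=\mathcal{T}^\magn\psi(z+iw,z-iw)$ and substitute $(z+iw,z-iw)$ into the kernel $e^{-\magn(z-\xi)(w-\bxi)}$ of \eqref{IntTransf1}. Writing $\xi=x+iy$ and expanding the bilinear form, the holomorphic product collapses through $(z+iw)(z-iw)=z^2+w^2$, the mixed terms $-(z+iw)\bxi-(z-iw)\xi$ combine into the purely real $-2(zx+wy)$, and together with $|\xi|^2=x^2+y^2$ the exponent completes two squares into $-\magn[(z-x)^2+(w-y)^2]$, the $\sqrt2$-normalisation being precisely the one encoded by $g_i\in\sqrt2\,U(2)$ in \eqref{gi}. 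The integrand then factorises as $\rho_0^\magn(x-z/\sqrt2)\rho_0^\magn(y-w/\sqrt2)$, which is exactly the kernel of $\mathcal{B}^{2,\magn}$ in \eqref{2dSBT}.

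For \eqref{ConnIntTransf1Wigner} I would compute $\mathcal{T}^\magn\mathcal{W}^\magn f(z,w)$ head-on, inserting \eqref{Wignerdf} into \eqref{IntTransf1} to obtain a triple integral over the Wigner variable $t$ and over $\xi=x+iy$. The first key step is to carry out the Gaussian integral in $x$ via $\int_\R e^{-\magn x^2+\magn x(z+w-it)}\,dx=\sqrt{\pi/\magn}\,e^{\magn(z+w-it)^2/4}$; expanding $(z+w-it)^2$ splits off a $t$-free factor $e^{\magn(z+w)^2/4}$, a linear phase $e^{-i\magn t(z+w)/2}$, and a Gaussian $e^{-\magn t^2/4}$. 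The second key step is the unimodular substitution $u=y+t/2$, $v=y-t/2$, which simultaneously restores the arguments of $f$ and, after regrouping, yields the Gaussian $e^{-\frac\magn2(u^2+v^2)}$ together with a purely imaginary linear form in $(u,v)$. Collecting the surviving scalar Gaussians gives $e^{-\magn zw}e^{\magn(z+w)^2/4}=e^{\frac\magn4(z-w)^2}$; comparing what remains with the kernel of $\mathcal{T}^{\magn/2}$ evaluated at $(-ig_i)\cdot(z,w)=(-iz+w,-iz-w)$ — expanded exactly as in the first part — identifies the remaining integral with $\Gamma_{-ig_i}\mathcal{T}^{\magn/2}f$, while the leftover scalar $e^{-\frac\magn4(z+w)^2}$ and the constant $(2\magn)^{-1/2}$ drop out of the prefactor bookkeeping.

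A shorter alternative is to chain the first identity in the form $\mathcal{T}^\magn=\Gamma_{g_i^{-1}}\mathcal{B}^{2,\magn}$ with the magnetic analogue of Luo's relation $\mathcal{B}^2\mathcal{W}=\Gamma_g\mathcal{B}^2$: substituting and composing the two $M_2(\C)$-actions (note $\Gamma_{g_1}\Gamma_{g_2}=\Gamma_{g_2g_1}$) would regenerate $\Gamma_{-ig_i}$ together with the scalar and Gaussian factors. Either way, the main obstacle is the constant- and phase-bookkeeping in the second identity: one must complete the square in three real variables at once, follow the change of field strength from $\magn$ to $\magn/2$, and — most delicately — verify that the quadratic-plus-linear form in $(u,v)$ coincides exactly with the $-ig_i$-transformed kernel of $\mathcal{T}^{\magn/2}$, since a stray sign or a factor $\sqrt2$ in the linear phase or in the Jacobian would land on a different matrix inside $\Gamma$.
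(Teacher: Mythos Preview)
Your approach is essentially identical to the paper's: for the first identity you both substitute $(z+iw,z-iw)$ into the kernel of $\mathcal{T}^\magn$ and factor it as a product of two Gaussians, and for \eqref{ConnIntTransf1Wigner} you both perform the Gaussian $x$-integral first and then the unimodular change $(y,t)\mapsto (y\pm t/2)$ before matching to the kernel of $\mathcal{T}^{\magn/2}$ at $(-ig_i)\cdot(z,w)$. The only discrepancy is a bookkeeping slip in your Part~1 --- the completed square $-\magn[(z-x)^2+(w-y)^2]$ gives $\rho_0^\magn(x-z)\rho_0^\magn(y-w)$, without a $\sqrt2$ in the arguments --- but this mirrors an inconsistency already present between the paper's definition \eqref{2dSBT} and its own proof.
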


The second integral transform we deal with is new and connects the generalized Bargmann-Fock spaces $\mathcal{F}^{2,\magn}_n(\C)$ and $\mathcal{F}^{2,\magn}_{n'}(\C)$. It is defined by
 \begin{eqnarray}\label{IntTransf3}
\mathcal{T}^\magn_{n,n'} (\psi)(z) := \left(\frac{(-1)^{n'} \magn}{\pi \sqrt{ n! n'! \magn^{n+n'} }}\right)
\int_{\C} e^{-\magn|\xi|^2 +\magn \bxi z }  H_{n,n'}^\magn(\xi-z,\bxi-\bz)   \psi(\xi)  d\lambda(\xi).
  \end{eqnarray}

 \begin{theorem}\label{thm:IntTransf3}
The integral transform $\mathcal{T}^\magn_{n,n'} $ is a unitary operator from $\mathcal{F}^{2,\magn}_n(\C)$ onto $\mathcal{F}^{2,\magn}_{n'}(\C)$ and its inverse is given by $\left(\mathcal{T}^\magn_{n,n'}\right)^{-1} = \mathcal{T}^\magn_{n',n} $.
  Moreover, we have the following integral reproducing property for the UCHP
    \begin{eqnarray}\label{ActionIntTransf3}
\mathcal{T}^\magn_{n,n'} (H_{m,n}^{\magn } )(z) = \left(\frac{n! \magn^{n}}{n'! \magn^{n'} } \right)^{1/2} H_{m,n'}^{\magn }(z,\bz).
  \end{eqnarray}
\end{theorem}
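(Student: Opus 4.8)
The plan is to reduce everything to the reproducing property \eqref{ActionIntTransf3}; once this is in hand, unitarity, surjectivity and the formula for the inverse are purely formal. Indeed, for fixed $n$ the family $\{H_{m,n}^\magn\}_{m\geq 0}$ is an orthogonal basis of the eigenspace $\mathcal{F}^{2,\magn}_{n}(\C)$ of \eqref{GBFn}, with the well-known norms
\[
\norm{H_{m,n}^\magn}_{\Lnuc}^2 = \frac{\pi}{\magn}\,\magn^{m+n}\, m!\, n! .
\]
Writing $c_{n,n'} := \left(n!\,\magn^{n}/n'!\,\magn^{n'}\right)^{1/2}$, the identity \eqref{ActionIntTransf3} reads $\mathcal{T}^\magn_{n,n'}(H_{m,n}^\magn) = c_{n,n'} H_{m,n'}^\magn$, so $\mathcal{T}^\magn_{n,n'}$ carries an orthogonal basis of $\mathcal{F}^{2,\magn}_{n}(\C)$ to a scalar multiple of an orthogonal basis of $\mathcal{F}^{2,\magn}_{n'}(\C)$. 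A direct check gives $|c_{n,n'}|^2\norm{H_{m,n'}^\magn}^2 = \norm{H_{m,n}^\magn}^2$, so the map preserves norms, hence inner products, on this basis and extends to an isometry of $\mathcal{F}^{2,\magn}_{n}(\C)$ onto $\mathcal{F}^{2,\magn}_{n'}(\C)$ (surjectivity because the image already contains every $H_{m,n'}^\magn$). Finally $c_{n,n'}c_{n',n}=1$ forces $\mathcal{T}^\magn_{n',n}\mathcal{T}^\magn_{n,n'} = \mathrm{id}$ on the basis, which identifies $\left(\mathcal{T}^\magn_{n,n'}\right)^{-1} = \mathcal{T}^\magn_{n',n}$.

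It remains to prove \eqref{ActionIntTransf3}, equivalently (after inserting the normalizing constant of \eqref{IntTransf3}) the integral identity
\[
\int_{\C} e^{-\magn|\xi|^2+\magn\bxi z}\,H_{n,n'}^\magn(\xi-z,\bxi-\bz)\,H_{m,n}^\magn(\xi,\bxi)\,d\lambda(\xi)
= \frac{\pi}{\magn}(-1)^{n'} n!\,\magn^{n}\, H_{m,n'}^\magn(z,\bz).
\]
I would treat the whole family (in $n'$) at once via a one-sided generating function. Differentiating the Rodrigues formula \eqref{gchpmu} and summing the holomorphic derivatives $\partial_\zeta^{n'}$ into a translation gives
\[
\sum_{n'\geq 0}\frac{v^{n'}}{n'!}\,H_{n,n'}^\magn(\zeta,\overline{\zeta}) = \magn^{n}(\zeta-v)^{n}e^{\magn v\overline{\zeta}}.
\]
Multiplying the desired identity by $v^{n'}/n'!$, summing over $n'$, and applying this formula twice — to the kernel at $\zeta=\xi-z$, and (with $v\mapsto -v$ and first index $m$) to the right-hand side — collapses the Gaussian factors, and after cancellation and the substitution $Z:=z+v$ the whole family becomes equivalent to the single master identity
\[
\int_{\C} e^{-\magn|\xi|^2+\magn\bxi Z}\,(\xi-Z)^{n}\,H_{m,n}^\magn(\xi,\bxi)\,d\lambda(\xi) = \frac{\pi}{\magn}\, n!\,\magn^{m}\, Z^{m}.
\]
Once the master identity is established, matching Taylor coefficients in $v$ returns \eqref{ActionIntTransf3} for each $n'$, the summation–integration interchange being justified by the Gaussian decay.

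To prove the master identity I would invoke the already-proven action formula \eqref{ActionIntTransf1}, which after expanding the exponent reads
\[
\int_{\C} e^{-\magn|\xi|^2+\magn z\bxi+\magn w\xi}\,H_{m,n}^\magn(\xi,\bxi)\,d\lambda(\xi) = \frac{\pi}{\magn}\,\magn^{m+n}\, z^{m} w^{n}\, e^{\magn zw}.
\]
Setting $z=Z$ and applying the operator $\left(\magn^{-1}\partial_w - Z\right)^{n}$ before putting $w=0$ produces, on the left, the factor $(\xi-Z)^{n}$ inside the integral — because $\left(\magn^{-1}\partial_w - Z\right)e^{\magn w\xi}=(\xi-Z)e^{\magn w\xi}$ — while on the right it sends $w^{n}e^{\magn Zw}$ to $\magn^{-n}n!\,e^{\magn Zw}$; evaluating at $w=0$ yields exactly $\frac{\pi}{\magn}n!\,\magn^{m}Z^{m}$, which is the master identity.

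The genuinely substantive steps are the reduction of the whole $n'$-family to the master identity through the one-sided generating function, together with the differential-operator trick deducing the master identity from \eqref{ActionIntTransf1}; I expect this reduction to be the main obstacle, since it is where the seemingly two-parameter structure collapses onto an already available result. Everything else — the norm and orthogonality bookkeeping, surjectivity, and the inverse — is routine. The only delicate points are the vanishing of boundary terms, differentiation under the integral sign, and the summation–integration interchange, all of which are controlled by the Gaussian weight.
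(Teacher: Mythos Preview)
Your argument is correct, but it is organized quite differently from the paper's. The paper recognizes the kernel of $\mathcal{T}^\magn_{n,n'}$ as the coherent-states kernel $\sum_m \overline{H^\magn_{m,n}(\xi,\bxi)}\,H^\magn_{m,n'}(z,\bz)/(\norm{H^\magn_{m,n}}\,\norm{H^\magn_{m,n'}})$ by summing over the \emph{inner} index $m$ via the Mehler-type identity of Theorem~\ref{thm:bilgenfct1} (with $t=1$), after which unitarity and \eqref{ActionIntTransf3} drop out of the abstract formalism of Section~\ref{s3}. You instead fix the kernel and sum over the \emph{outer} index $n'$ with the one-sided generating function \eqref{GenFctm}, which collapses the whole family to the case $n'=0$ (your ``master identity''), and then obtain that case from the already-proved \eqref{ActionIntTransf1} by the differential operator $(\magn^{-1}\partial_w - Z)^n$. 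Both reductions and the operator trick check out line by line. The trade-off is this: the paper's route explains \emph{why} the kernel has its specific shape (it is exactly the coherent-states kernel), at the cost of proving Theorem~\ref{thm:bilgenfct1} separately; your route is more self-contained, needing only \eqref{GenFctm} and Theorem~\ref{thm:IntTransf1}, and in effect proves $\mathcal{T}^\magn_{n,n'}$ by first establishing $\mathcal{T}^\magn_{n,0}$ and then ``raising'' $n'$ via a generating function, thereby bypassing Theorem~\ref{thm:bilgenfct1} altogether.
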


In particular, the integral operator $\mathcal{T}^\magn_{0,n}$ maps isometrically the standard Bargmann-Fock space $\mathcal{F}^{2,\magn}(\C)$ onto the Bargmann-Fock space $\mathcal{F}^{2,\magn}_{n}(\C)$. Its inverse is given through $\mathcal{T}^\magn_{n,0}$,
 \begin{eqnarray}\label{IntTransf3n0}
\mathcal{T}^\magn_{n,0} (\psi)(z) := \left(\frac{ \magn }{\pi}\right) \left(\frac{ \magn^{n}}{n!}\right)^{1/2}
\int_{\C} e^{-\magn|\xi|^2 +\magn \bxi z }  (\xi-z)^n   \psi(\xi)  d\lambda(\xi).
  \end{eqnarray}
Accordingly, the UCHP $H_{m,n}^{\magn }$, for varying $m$ and fixed $n$, is a common basis of $L^2$-eigenfunctions of the magnetic Schr\"odinger operator $\Delta_\magn$ and the integral operator
\begin{eqnarray}\label{IntTransf3nn}
\mathcal{T}^\magn_{n} \psi(z) := \mathcal{T}^\magn_{n,n} \psi(z) = \int_{\C} e^{-\magn |\xi|^2 + \magn\bxi z }  H_{n,n}^{\magn }(\xi-z,\bxi-\bz)   H_{m,n}^{\magn }(\xi,\bxi)  d\lambda(\xi)
  \end{eqnarray}
thanks to \eqref{ActionIntTransf3} with $n=n'$. The proof of Theorem \ref{thm:IntTransf3} lies essentially on the following result giving the closed expression of the generating function involving $t^n H_{m,n}^{\magn }(z,\bz ) H_{n,m'}^{\magn }(w,\bw ).$

\begin{theorem}\label{thm:bilgenfct1} For every $t$ in the unit circle, $|t|=1$, and $z,w\in \C$, we have the Mehler formula
\begin{eqnarray}\label{genfct1hh}
\sum\limits_{n=0}^{+\infty} \frac{ t^n }{n! \magn ^n  }  H_{m,n}^{\magn }(z,\bz ) H_{n,m'}^{\magn }(w,\bw )
 &= (-t)^{m'}   H_{m,m'}^{\magn}( z -tw, \bz - \overline{t}\bw) e^{\magn  t w\bz } .
\end{eqnarray}
\end{theorem}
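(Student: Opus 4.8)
The plan is to deduce \eqref{genfct1hh} from a single identity between entire functions of two auxiliary variables, obtained by forming the generating functions of both sides in the two \emph{fixed} indices $m$ and $m'$. Once both sides are summed against $a^m b^{m'}/(m!\,m'!)$, the left-hand side factorises over $n$ and collapses to an exponential, while the right-hand side is read off from the exponential generating function of the UCHP; comparing the two exponents then yields the claim, with $|t|=1$ entering at exactly one point.

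First I would record, straight from the Rodrigues formula \eqref{gchpmu}, the two single-index generating functions. Writing $H_{m,n}^\magn=(-1)^{m+n}e^{\magn z\bz}\partial_{\bz}^m\partial_z^n e^{-\magn z\bz}$ and summing $\sum_m a^m/m!$ (resp. $\sum_n b^n/n!$) replaces the derivatives by the translation operators $e^{-a\partial_{\bz}}$ (resp. $e^{-b\partial_z}$); evaluating the resulting shifts of $e^{-\magn z\bz}$ gives
\begin{equation*}
\sum_{m\ge 0}\frac{a^m}{m!}H_{m,n}^\magn(z,\bz)=\magn^n(\bz-a)^n e^{\magn a z},\qquad
\sum_{n\ge 0}\frac{b^n}{n!}H_{m,n}^\magn(z,\bz)=\magn^m(z-b)^m e^{\magn b\bz}.
\end{equation*}
The same computation applied to both indices at once produces the exponential generating function $\sum_{m,m'}\frac{u^m v^{m'}}{m!\,m'!}H_{m,m'}^\magn(z,\bz)=e^{\magn(uz+v\bz-uv)}$ used below (this is Theorem \ref{thm:bilgenfct}).

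Next I would multiply \eqref{genfct1hh} by $a^m b^{m'}/(m!\,m'!)$ and sum over $m,m'\ge 0$; for fixed $z,w$ and $a,b$ in a small disc the terms are dominated by a convergent exponential series, so interchanging the $(m,m')$-sums with the $n$-sum is justified. On the left the inner sums are evaluated by the two formulas above (the $m$-sum giving $\magn^n(\bz-a)^n e^{\magn a z}$ and the $m'$-sum giving $\magn^n(w-b)^n e^{\magn b\bw}$), after which the $n$-sum is a plain exponential series, so the left-hand generating function equals $\exp\{\magn a z+\magn b\bw+\magn t(\bz-a)(w-b)\}$. On the right I would apply the exponential generating function at the shifted point $(z-tw,\bz-\overline{t}\bw)$ with $u=a$ and $v=-tb$, obtaining $e^{\magn t w\bz}\exp\{\magn(a(z-tw)-tb(\bz-\overline{t}\bw)+abt)\}$.

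It then remains to compare the two exponents as polynomials in $a$ and $b$. All cross terms match term by term — $\magn az$, $\magn t w\bz$, $-\magn t b\bz$, $-\magn t aw$ and $\magn t ab$ appear identically on both sides — except that the left-hand term $\magn b\bw$ is matched on the right only by $\magn t\overline{t}\,b\bw$, arising from expanding $-tb(\bz-\overline{t}\bw)$. These agree precisely because $|t|=1$, i.e. $t\overline{t}=1$; this is the one and only place where the hypothesis is used. Hence the two entire functions of $(a,b)$ coincide, and equating coefficients of $a^m b^{m'}/(m!\,m'!)$ recovers \eqref{genfct1hh} for all $m,m'$. The argument is thus entirely mechanical once the single-index generating functions are in hand; the only genuine subtlety is the $t\overline{t}$ cancellation, which is exactly what forces the restriction to the unit circle.
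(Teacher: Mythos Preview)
Your proof is correct and takes a genuinely different route from the paper's. The paper keeps $m,m'$ fixed throughout: it inserts the Rodrigues-type representation $H_{m,n}^{\magn}(z,\bz)=(-1)^m\magn^n e^{\magn z\bz}\partial_{\bz}^m(\bz^n e^{-\magn z\bz})$, interchanges the $n$-sum with $\partial_{\bz}^m$, applies the single-index generating function \eqref{GenFctm} to collapse the sum, and then recognises in the resulting expression the Rodrigues form of $H_{m,m'}^{\magn}$ at the shifted argument $(z-tw,\bz-\overline{t}\bw)$; the hypothesis $|t|=1$ enters when rewriting $(\bw-t\bz)^{m'}=(-t)^{m'}(\bz-\overline{t}\bw)^{m'}$. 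By contrast you sum over $m,m'$ as well, so that both sides become single exponentials whose exponents can be compared term by term, and then you undo the extra summation by reading off Taylor coefficients in $(a,b)$. Your approach is more elementary in that it avoids the differential-operator bookkeeping and the recognition step, trading them for a straightforward polynomial comparison; the paper's approach has the minor advantage of working directly at fixed $m,m'$ without the coefficient-extraction step. One small slip: the double generating function you invoke is equation \eqref{genFctgHermite}, not Theorem \ref{thm:bilgenfct} (which concerns the product $H_m^\mu(x)\overline{H_{m,n}^{\magn}(z,\bz)}$).
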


\section{Abstract formalism for the coherent states transforms and application} \label{s3}
The proofs of Theorems \ref{thm:IntTransf1} and \ref{thm:IntTransf3} are subject to a general principle issued from the framework of coherent states transform. A brief review of this principle presented in the sequel is taken from \cite{Hall1994,Gazeau2009}.
Let $(\mathcal{H}_X;\omega_X)$ be an infinite dimensional complex functional Hilbert space on $X$ with an orthogonal basis $\left\{
e_{n}\right\}_{n}$ with respect to the inner scaler product
$$ \scal{\phi,\psi}_{\mathcal{H}_X} := \int_X \phi (x) \overline{\psi(x)} \omega_X(x)dx $$
for given weight measure $\omega_X$. In a similar way we consider $(\mathcal{H}_Y;\omega_Y)$ with an orthogonal basis $\left\{
f_{n}\right\}_{n}$ and assume that $\mathcal{H}_Y$ is in addition a reproducing kernel Hilbert space with reproducing kernel $K(y,y')$.
Associated to the data $(\mathcal{H}_X;\omega_X;\left\{e_{n}\right\}_{n})$ and $(\mathcal{H}_Y;\omega_Y;\left\{f_{n}\right\}_{n})$, we perform the following kernel function $T: X\times Y \longrightarrow \C$ defined by
$$ T(x,y) := \sum_{n=0}^{\infty} \frac{\overline{e_{n}(x)} f_{n}(y) }{\norm{e_{n}}_{\mathcal{H}_X}\norm{f_{n}}_{\mathcal{H}_Y}} .$$
 It is straightforward to check that $\scal{T(\cdot,y),T(\cdot,y')}_{\mathcal{H}_X}$ reduces further to $K(y,y')$, the reproducing kernel function of $\mathcal{H}_Y$.
Moreover, the map $y \in Y \longmapsto T(\cdot,y) \in \mathcal{H}_X$ defines a quantization of $Y$ into $\mathcal{H}_X$.
Thus, we can consider the integral transform
$$ \mathcal{T}(\phi)(y) := \int_X T(x,y) \phi(x) \omega_X(x) dx$$
for every $\phi \in \mathcal{H}_X$.  This transform maps $\mathcal{H}_X$ onto $\mathcal{H}_Y$ and satisfies
$$ \mathcal{T}\left( \frac{e_{k}}{\norm{e_{k}}_{\mathcal{H}_X}}\right) = \frac{f_{k}}{\norm{f_{k}}_{\mathcal{H}_Y}} $$
and subsequently
$\mathcal{T}(\phi) = \sum\limits_{n=0}^{\infty} \beta_n   f_{n} \in \mathcal{H}_Y$ for every $\phi = \sum\limits_{n=0}^{\infty} \alpha_n e_n \in \mathcal{H}_X$,
where $\beta_n := \alpha_n {\norm{e_{n}}_{\mathcal{H}_X}}/{\norm{f_{n}}_{\mathcal{H}_Y}} $.
Moreover, it is readily easy to see that
\begin{align*} \norm{ \phi }_{\mathcal{H}_X}^2
=  \sum\limits_{n=0}^{\infty} |\alpha_n|^2 \norm{e_{n}}_{\mathcal{H}_X}^2
=  \sum\limits_{n=0}^{\infty} |\beta_n|^2  \norm{f_{n}}_{\mathcal{H}_Y}^2
= \norm{  \mathcal{T}(\phi)}_{\mathcal{H}_Y}^2 .
\end{align*}
Thereby, $\mathcal{T}$ defines a isometric linear transform from $\mathcal{H}_X$ onto $\mathcal{H}_Y$ and
 the function $x \longmapsto \scal{ \phi , T(x,\cdot)  }_{\mathcal{H}_Y} $ belongs to $\mathcal{H}_X$ for every $\phi \in \mathcal{H}_Y$, since
\begin{align*}
\scal{ \phi , T(x,\cdot)  }_{\mathcal{H}_Y}
&=   \scal{\phi , \sum_{n=0}^{\infty} \frac{\overline{e_{n}(x)} f_{n}}{\norm{e_{n}}_{\mathcal{H}_X}\norm{f_{n}}_{\mathcal{H}_Y}} }_{\mathcal{H}_Y}
=   \sum_{n=0}^{\infty} \frac{ \scal{\phi ,  f_{n} }_{\mathcal{H}_Y} }{\norm{e_{n}}_{\mathcal{H}_X}\norm{f_{n}}_{\mathcal{H}_Y}} e_{n}(x).
  \end{align*}
In addition, we have the following integral representation 
\begin{align*}
\phi(y) &= \int_X T(x,y) \scal{ \phi , T(x,\cdot)  }_{\mathcal{H}_Y}   \omega_X(x)dx
= \mathcal{T} \left( \scal{ \phi , T(x,\cdot)  }_{\mathcal{H}_Y} \right)(y)   \label{resId}
\end{align*}
for every $\phi \in \mathcal{H}_Y$.

The above formalism is then applied in \cite{Mouayn2011} to recover the result in \cite[Theorem 2.5]{Vasilevski2000} that we can reworded as follows
\begin{theorem}\label{thm:IntTransf2}
The integral operator in \eqref{IntTransf2} defines an isometric isomorphism from $\Lnur$ onto the generalized Bargmann-Fock space $\mathcal{F}^{2,\magn}_n(\C)$ defined by \eqref{GBFn}. Moreover, we have
   \begin{eqnarray}\label{ActionIntTransf2}
 \mathcal{B}^{1,\magn}_n (H_{m}^\magn )(z) = \left(\frac{\magn }{\pi}\right)^{1/4}  \left(\dfrac{ 2^m }{ n! \magn^{n}  }\right)^{1/2} H_{m,n}^\magn (z,\bz).
  \end{eqnarray}
\end{theorem}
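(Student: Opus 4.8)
The plan is to specialize the abstract coherent states formalism just reviewed to the data
\[
(\mathcal{H}_X,\omega_X,\{e_m\}_m)=\left(\Lnur,\,e^{-\magn x^2},\,\{H_m^\magn\}_{m\geq 0}\right),\qquad (\mathcal{H}_Y,\omega_Y,\{f_m\}_m)=\left(\mathcal{F}^{2,\magn}_n(\C),\,e^{-\magn|z|^2},\,\{H_{m,n}^\magn(z,\bz)\}_{m\geq 0}\right),
\]
with $n\geq 0$ fixed. First I would record the three structural inputs the formalism requires. Namely: (i) the rescaled real Hermite polynomials $\{H_m^\magn\}_m$ form an orthogonal basis of $\Lnur$ with $\norm{H_m^\magn}_{\Lnur}^2=(\pi/\magn)^{1/2}\,2^m m!\,\magn^m$, which follows from $H_m^\magn(x)=\magn^{m/2}H_m(\sqrt{\magn}\,x)$ and the classical Hermite orthogonality; (ii) for fixed $n$ the UCHP $\{H_{m,n}^\magn(z,\bz)\}_m$ form a complete orthogonal system of the eigenspace $\mathcal{F}^{2,\magn}_n(\C)$ in \eqref{GBFn}, with $\norm{H_{m,n}^\magn}_{\Lnuc}^2=(\pi/\magn)\,\magn^{m+n}m!\,n!$ (obtained from $H_{m,n}^\magn(z,\bz)=\magn^{(m+n)/2}H_{m,n}(\sqrt\magn z,\sqrt\magn\bz)$ and the orthogonality of the complex Hermite polynomials); and (iii) $\mathcal{F}^{2,\magn}_n(\C)$ is a reproducing kernel Hilbert space, being a closed subspace of $\Lnuc$ on which point evaluation is continuous.

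Granting these inputs, the formalism produces the kernel
\[
T(x,z)=\sum_{m=0}^{\infty}\frac{H_m^\magn(x)\,H_{m,n}^\magn(z,\bz)}{\norm{H_m^\magn}_{\Lnur}\,\norm{H_{m,n}^\magn}_{\Lnuc}},
\]
the polynomials $H_m^\magn$ being real-valued. The point to watch is that the associated transform reads $\mathcal{T}(\phi)(z)=\int_\R T(x,z)\phi(x)\,e^{-\magn x^2}dx$, whereas \eqref{IntTransf2} integrates against the bare Lebesgue measure $dx$; hence the kernel of $\mathcal{B}^{1,\magn}_n$ must be matched with $T(x,z)\,e^{-\magn x^2}$, not with $T(x,z)$ itself. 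The crucial step is therefore to sum the series in closed form and verify
\[
T(x,z)\,e^{-\magn x^2}=\left(\frac{\magn}{\pi}\right)^{3/4}\frac{1}{(2^n\magn^n n!)^{1/2}}\,\rho_0^\magn\!\left(x-\frac{z}{\sqrt{2}}\right)H_n^\magn\!\left(\frac{z+\bz}{\sqrt{2}}-x\right),
\]
which is exactly the exponential generating function for the product $H_m^\magn(x)H_{m,n}^\magn(z,\bz)$ recalled in Section~\ref{s1}; the powers of $2$ reconcile through the $\sqrt{2}$-rescalings in the arguments of $\rho_0^\magn$ and $H_n^\magn$ (a quick check at $n=0$, where $H_{m,0}^\magn(z,\bz)=\magn^m z^m$, recovers the Gaussian $(\magn/\pi)^{3/4}e^{\sqrt2\magn xz-\magn z^2/2}$ on both sides). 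Once this identification is made, the operator $\mathcal{T}$ of the formalism is precisely $\mathcal{B}^{1,\magn}_n$.

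The theorem is then immediate from the general principle: $\mathcal{B}^{1,\magn}_n$ is an isometry of $\Lnur$ into $\mathcal{F}^{2,\magn}_n(\C)$ carrying the normalized basis $H_m^\magn/\norm{H_m^\magn}_{\Lnur}$ to the normalized basis $H_{m,n}^\magn/\norm{H_{m,n}^\magn}_{\Lnuc}$, with surjectivity guaranteed exactly by the completeness asserted in (ii). In particular
\[
\mathcal{B}^{1,\magn}_n(H_m^\magn)=\frac{\norm{H_m^\magn}_{\Lnur}}{\norm{H_{m,n}^\magn}_{\Lnuc}}\,H_{m,n}^\magn(z,\bz),
\]
and substituting the two norms from (i) and (ii) collapses the ratio to $(\magn/\pi)^{1/4}(2^m/(n!\,\magn^n))^{1/2}$, which is precisely \eqref{ActionIntTransf2}. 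I expect the genuine obstacle to lie in ingredient (ii)—proving that the UCHP with fixed $n$ are eigenfunctions of $\Delta_\magn$ for the eigenvalue $n$ and, more importantly, that they \emph{exhaust} $\mathcal{F}^{2,\magn}_n(\C)$—together with the closed-form summation realizing the generating function of the previous paragraph. By contrast, the isometry and the basis normalization are purely formal consequences of the abstract scheme.
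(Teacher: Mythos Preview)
Your proposal is correct and follows essentially the same route as the paper: specialize the abstract coherent states formalism to the pair of bases $\{H_m^\magn\}_m$ in $\Lnur$ and $\{H_{m,n}^\magn\}_m$ in $\mathcal{F}^{2,\magn}_n(\C)$, with the norms you list, and then invoke the generating function of Theorem~\ref{thm:bilgenfct} (your reference to Section~\ref{s1} should really point to \eqref{thm:bilgenfct3HnHmn}) to identify the kernel in closed form. Your explicit derivation of \eqref{ActionIntTransf2} from the norm ratio and your remark that the weight $e^{-\magn x^2}$ must be absorbed into the kernel are useful clarifications, but the underlying argument is the paper's own.
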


The proof of Theorem \ref{thm:IntTransf2} lies on a particular case of following result.
\begin{theorem} \label{thm:bilgenfct}
We have the generating function
\begin{eqnarray} \label{thm:bilgenfct3HnHmn}
 \sum_{m=0}^{+\infty}\dfrac{\xi^m H_{m}^\mu (x) \overline{H_{m,n}^{\magn }(z; \bz)} }{ m!  \magn^m }
 =    e^{-\mu ( \xi^2\bz^2 - 2 x \xi \bz ) }
 H_n^{\mu\xi^2} \left( \bz + \frac{\magn  }{2\mu \xi^2 } z - \frac{x}{\xi}   \right).
\end{eqnarray}
\end{theorem}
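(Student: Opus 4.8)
The plan is to reduce \eqref{thm:bilgenfct3HnHmn} to two elementary generating functions for the rescaled real Hermite polynomials and then to recognize the residual finite sum as a single Hermite polynomial. First I would record the conjugation symmetry $\overline{H_{m,n}^{\magn}(z,\bz)}=H_{n,m}^{\magn}(z,\bz)$, which follows from \eqref{gchpmu} since $\magn$ and $z\bz$ are real and conjugation swaps $\partial_z$ and $\partial_{\bz}$. Thus the left-hand side is $S:=\sum_{m\geq 0}\frac{\xi^m}{m!\,\magn^m}H_m^\mu(x)\,H_{n,m}^{\magn}(z,\bz)$. Next I would insert the explicit expansion
\[ H_{n,m}^{\magn}(z,\bz)=\sum_{k=0}^{\min(n,m)}\frac{n!\,m!}{k!\,(n-k)!\,(m-k)!}\,(-1)^k\,\magn^{\,n+m-k}\,z^{\,n-k}\,\bz^{\,m-k}, \]
which comes from \eqref{gchpmu} by the Leibniz rule, and interchange the (in $k$ finite) summations. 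After cancelling the factors $m!$ and $\magn^{m}$ and setting $m=m'+k$, the inner sum over $m'$ becomes the shifted generating function, with $t=\xi\bz$,
\[ \sum_{m'\geq 0}\frac{t^{m'}}{m'!}\,H_{m'+k}^{\mu}(x)=e^{2\mu x t-\mu t^2}\,H_k^{\mu}(x-t), \]
which I would obtain from the classical relation $\sum_{m}\frac{\tau^m}{m!}H_{m+k}(X)=e^{2X\tau-\tau^2}H_k(X-\tau)$ (itself immediate from summing $\sum_N\frac{(s+t)^N}{N!}H_N(X)=e^{2X(s+t)-(s+t)^2}$ over both indices) via the rescaling $H_m^\mu(x)=\mu^{m/2}H_m(\sqrt\mu\,x)$.

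Carrying this out produces the prefactor $e^{-\mu(\xi^2\bz^2-2x\xi\bz)}$ exactly as on the right-hand side, and leaves the finite binomial sum
\[ P_n:=\sum_{k=0}^{n}\binom{n}{k}(\magn z)^{\,n-k}(-\xi)^k\,H_k^{\mu}(x-\xi\bz). \]
The final step is to identify $P_n$ with $H_n^{\mu\xi^2}\bigl(\bz+\tfrac{\magn}{2\mu\xi^2}z-\tfrac{x}{\xi}\bigr)$. I would do this by a second generating-function comparison: multiplying $P_n$ by $s^n/n!$, summing, and using the same reindexing gives
\[ \sum_{n\geq 0}\frac{s^n}{n!}\,P_n=e^{s\magn z}\sum_{k\geq 0}\frac{(-\xi s)^k}{k!}H_k^{\mu}(x-\xi\bz)=\exp\bigl(s(\magn z-2\mu\xi(x-\xi\bz))-\mu\xi^2 s^2\bigr). \]
This must be matched with $\sum_{n\geq 0}\frac{s^n}{n!}H_n^{\mu\xi^2}(w)=e^{2\mu\xi^2 w s-\mu\xi^2 s^2}$: the quadratic terms in $s$ already agree, and reading off the linear term forces $2\mu\xi^2 w=\magn z-2\mu\xi x+2\mu\xi^2\bz$, i.e. $w=\bz+\tfrac{\magn}{2\mu\xi^2}z-\tfrac{x}{\xi}$. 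Equating coefficients of $s^n$ then yields $P_n=H_n^{\mu\xi^2}(w)$, and combining with the prefactor gives \eqref{thm:bilgenfct3HnHmn}.

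All the power-series rearrangements are legitimate because the real Hermite generating function is entire, so every series converges absolutely for all values of $x,z,\xi$. The computation is routine once the two auxiliary generating functions are in hand; the only genuinely delicate point is the bookkeeping of the two distinct parameters $\mu$ and $\magn$ and the verification that the $s^2$-terms coincide so that the affine argument $w$ emerges exactly as stated. Finally, since both sides are polynomial in $\xi$ (and the asserted identity is an equality of polynomials, the parameter $\mu\xi^2$ entering only through the polynomial $H_n$), the formula extends from real to complex $\xi$ by analytic continuation.
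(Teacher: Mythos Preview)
Your argument is correct and complete; the final sentence about ``both sides are polynomial in $\xi$'' is inaccurate (the exponential prefactor is there on both sides), but it is also unnecessary, since every generating function you invoke is entire and the computation is already valid for complex $\xi$.

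The route, however, is genuinely different from the paper's. The paper exploits the operational representation $H_{m,n}(z,\bz)=e^{-\Delta_{\C}}(z^{m}\bz^{n})$: it brings the sum inside the operator, applies the ordinary Hermite generating function to produce a Gaussian, differentiates that Gaussian $m$ times (yielding a single Hermite factor), and then collapses the remaining binomial sum via the classical addition formula $\sum_{j}\binom{m}{j}(2\eta)^{m-j}H_{j}(X)=H_{m}(X+\eta)$. You instead expand $H_{n,m}^{\magn}$ explicitly by Leibniz, invoke the \emph{shifted} generating function $\sum_{m'}\frac{t^{m'}}{m'!}H_{m'+k}^{\mu}(x)=e^{2\mu xt-\mu t^{2}}H_{k}^{\mu}(x-t)$ (which you derive cleanly by differentiating the basic one), and then recognize the leftover finite sum $P_{n}$ by matching a second generating function in an auxiliary variable $s$. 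Your method avoids the operational calculus and is entirely elementary, at the cost of carrying one extra lemma (the shifted identity); the paper's method is shorter once $e^{-\Delta_{\C}}$ is in hand and lands directly on the addition formula. Both hinge on the same underlying Hermite addition property, reached from opposite ends.
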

Here $H_n^\magn(x)$ (resp. $H_{m,n}^\magn(z,\bz)$) are the real (resp. univariate complex) Hermite polynomials in \eqref{wrHn} (resp. \eqref{gchpmu}).
Such polynomials are the rescaled version of the real (resp. complex) Hermite polynomials $H_{n}$ (resp. $H_{m,n}$) corresponding to $\magn =1$in the sense that
 \begin{eqnarray}\label{gchpReduction3}
 H_{m,n}^{\magn}\left(z, \bz \right)= \magn^{\frac{m+n}2} H_{m,n}(\sqrt{\magn }z;\sqrt{\magn }\bz)
 \quad \mbox{and} \quad  \magn^m H_m( \magn x) = H_m^{\magn^2}(x)).
\end{eqnarray}

The generating function \eqref{thm:bilgenfct3HnHmn} is a general form of the one obtained in \cite[Proposition 4.2]{Mouayn2011}.
The proof we present below is different, direct and simpler than one provided in \cite{Mouayn2011}.

 \begin{proof}
Making use of $H_{m,n}(z,\bz)=e^{-\Delta_{\C}} (z^{m}\bz^{n})$ as well as the well-known generating function for the real Hermite polynomials (\cite[p.187]{Rainville71}),
$$\sum_{n=0}^{+\infty}\dfrac{ \xi^{n} H_{n}^\mu (x) }{ n! } = e^{-\mu \xi^2 + 2\mu x \xi},$$
we obtain
\begin{align*}
\sum_{n=0}^{+\infty}\dfrac{t^n H_{n}^\mu (x) H_{m,n}(z,\bz)}{ n!\magn^n }
& = e^{-\Delta_{\C}} \left( z^{m} \sum_{n=0}^{+\infty}\dfrac{ (t\bz/\magn)^{n} H_{n}^\mu (x) }{ n! } \right)
\\&= e^{\mu x^2} e^{-\Delta_{\C}} \left( z^{m} e^{-\mu[\frac{t\bz}{\magn }- x  ]^2 }  \right).
\end{align*}
Now, by utilizing the fact
$$ \dfrac{\partial^j}{\partial \bz^j} \left( e^{- (a\overline{z} - b)^2} \right) =  (-1)^j a^j e^{- (a\overline{z} - b)^2}  H_j( a\overline{z} - b) $$
as well as the well-known identity 
$$ \sum_{j=0}^{m} \binom{m}{j}   H_{j}(x) (2\xi)^{m-j} = H_{m}( x + \xi ),$$
 we can rewrite the above sum as
\begin{align*}
\sum_{n=0}^{+\infty}\dfrac{t^n H_{n}^\mu (x) H_{m,n}(z,\bz)}{ n!\magn^n }
&= e^{\mu x^2}  \sum_{j=0}^{m}\binom{m}{j} z^{m-j} (-1)^j \dfrac{\partial^j}{\partial \bz^j} \left( e^{-\mu[\frac{t\bz}{\magn }- x  ]^2 } \right)
\\ &= \left(\frac{\sqrt{\mu}t}{\magn }\right)^m   e^{-\mu\left[\frac{t^2\bz^2}{\magn ^2}- 2 x \frac{t\bz}{\magn } \right] }
\sum_{j=0}^{m}\binom{m}{j} \left(\frac{\magn }{\sqrt{\mu}t} z \right)^{m-j}  H_j \left( \frac{\sqrt{\mu}t}{\magn } \bz - \sqrt{\mu} x  \right)
\\ &= \left(\frac{\sqrt{\mu}t}{\magn }\right)^m   e^{-\mu\left[\frac{t^2\bz^2}{\magn ^2}- 2 x \frac{t\bz}{\magn } \right] }
 H_m \left( \frac{\sqrt{\mu}t}{\magn } \bz + \frac{\magn }{2\sqrt{\mu}t} z - \sqrt{\mu} x   \right).
\end{align*}
Finally, the desired result follows thanks to \eqref{gchpReduction3}.
\end{proof}

\begin{proof}[of Theorem \ref{thm:IntTransf2}]
The kernel function associated to the Hilbert space $\Lnur$ and the generalized Bargmann-Fock space $\mathcal{F}^{2,\magn}_n(\C)$ is given by
\begin{align*}
T_n^\magn( x; z)
 &: =  \left(\frac{\magn }{\pi}\right)^{3/4}     \left( \frac{1}{n! \magn^{n}} \right)^{1/2}
\sum_{m=0}^{+\infty}\dfrac{H_{m}^\magn (x) H_{m,n}^\magn (z,\bz) }{ \sqrt{2^m} \magn ^m m! }.
\\&: =  \left(\frac{\magn }{\pi}\right)^{3/4}     \left( \frac{1}{n! \magn^{n}} \right)^{1/2}
\sum_{m=0}^{+\infty}\dfrac{H_{m}^\magn (x)\overline{H_{m,n}^\magn (\bz,z)} }{ \sqrt{2^m} \magn ^m m! }.\nonumber
\end{align*}
Indeed, the univariate real Hermite polynomials $ H_m^\magn(x)$ defined by \eqref{wrHn} form an orthogonal basis of $\Lnur$ with norm given explicitly by
$$ \norm{H_m^\magn }_{\Lnur}^2= \left(\frac{\pi}{\magn }\right)^{1/2}  2^m\magn ^m m!.$$
While the UCHP $H_{m,n}^{\magn }(z;\bz)$, for fixed $n$ and varying $m$, is an orthogonal basis of the generalized Bargmann-Fock space of level $n$, $\mathcal{F}^{2,\magn}_n(\C)$. The square norms of $H_{m,n}^{\magn }(z;\bz)$ is given by
   \begin{eqnarray}\label{SqnormHmn}
    \norm{H_{m,n}^\magn}^2_{L^{2,\magn}(\C)}= \left(\frac{\pi}{\magn}\right) m! n! \magn^{m+n}.
   \end{eqnarray}
By means of the generating function \eqref{thm:bilgenfct3HnHmn} we get
\begin{eqnarray}
T_n^\magn( x; z)
  =  \left(\frac{\magn }{\pi}\right)^{3/4} \left(\frac{1}{2^n \magn^n n! }\right)^{1/2}
  e^{-\frac{\magn }{2} z^2 +  \sqrt{2}\magn  x  z } H_n^{\magn} \left( \frac{ z + \bz}{\sqrt{2}}  - x   \right).
  \label{kernelHnHmn}
\end{eqnarray}
The proof follows by the coherent states formalism presented above.
\end{proof}

\section{Proof of main theorems}

\begin{proof}[of Theorem \ref{thm:IntTransf1}]
The kernel function of the integral operator $\mathcal{T}^\magn $ in \eqref{IntTransf1} is related to the exponential generating function involving the product of $e_{m,n}(u,v) = u^mv^n$ and $H_{m,n}^{\magn }(z;\bz)$.
Indeed, the functions $e_{m,n}(u,v) = u^mv^n$ form an orthogonal basis of the two-dimensional Bargmann-Fock space $\mathcal{F}^{2,\magn}(\C^2)$ whose
square norm is given by
   $$  \norm{e_{m,n}}_{L^{2,\magn}}^2= \left(\frac{\pi}{\magn}\right)^{2} \frac{m! n!}{\magn^{m+n}}.$$
While the polynomials $H_{m,n}^{\magn }(z;\bz)$, for varying $m$ and $n$, constitute an orthogonal basis of the Hilbert space $\Lnuc$.  
  Therefore, according to the principle described in the previous section, the corresponding kernel function
    \begin{align*}
T^\magn (z|u,v) = \left(\frac{\magn}{\pi}\right)^{3/2} \sum_{m,n=0}^{\infty} \frac{\magn ^{m+n} u^mv^n}{m!n!} \overline{H_{m,n}^{\magn }(z;\bz) }.
  \end{align*}
To conclude for Theorem \ref{thm:IntTransf1}, we need only to make use of the exponential generating function
 \cite{
 Gh13ITSF}
  \begin{equation}\label{genFctgHermite}
\sum_{m,n=0}^{\infty} \frac{u^mv^n}{m!n!} H_{m,n}^{\magn }(z;\bz) =  e^{\magn (uz +v \overline{z} -uv)} .
  \end{equation}
\end{proof}

\begin{proof}[of Theorem \ref{thm:IntTransf1b}]
The formula $\Gamma_{-i} \mathcal{R}^\magn_+  = \widetilde{\mathcal{F}}^{\nu/2}$  
is immediate from \eqref{IntTransf1} and \eqref{Fourier}. The formula \eqref{ConnFourier} follows by explicitly computing the action $\left( \mathcal{T}^\magn \right)^{-1} \Gamma_{-i} \mathcal{T}^\magn$.
Indeed, making use of \eqref{InverseT} giving the inverse of the transform $\mathcal{T}^\magn$ and by twice application of the integral formula \eqref{IntegFKernel} we obtain
   \begin{align*}
   \left( \mathcal{T}^\magn \right)^{-1} \Gamma_{-i} \mathcal{T}^\magn (\varphi )(\xi)
   &= \left(\frac{\magn}{\pi}\right)^2 \int_{\C} e^{-\magn |u|^2-i\magn u \bxi} \varphi (u) \left( \int_{\C} e^{-2\magn |z|^2 +\magn(\bxi -i \overline{u})z+\magn(\xi +i u)\bz } d\lambda(z) \right) d\lambda(u)
   \\ &= \left(\frac{\magn}{2\pi}\right) e^{\frac\magn 2 |\xi|^2 }\int_{\C} e^{-\frac\magn 2 |u|^2 -i\magn \left( \frac{u\bxi + \overline{u}\xi}{2} \right) } \varphi (u) d\lambda(u)\\
   &=   \widetilde{\mathcal{F}}^\magn (\varphi) (\xi) ,
    \end{align*}
    where $  \widetilde{\mathcal{F}}^\magn $ is as defined in \eqref{Fourier}. This completes the proof.
\end{proof}

\begin{proof}[of Theorem \ref{ThmBargmannWigner}]
The proof of $ \mathcal{B}^{2,\magn}  = \Gamma_{g_i}\mathcal{T}^\magn$ 
 follows by direct computation starting from $\Gamma_{g_i}\mathcal{T}^\magn \psi (z,w) = \mathcal{T}^\magn \psi \left(z+iw,z-iw\right)$ and next using the identity
$$ e^{-\magn(U-\xi)(V-\bxi)} = \rho_0^\magn\left(\Re\xi -\frac{U+V}{2}\right)\rho_0^\magn\left(\Im\xi -\frac{U-V}{2i}\right).$$
 Indeed, we obtain
  \begin{align*}
  \Gamma_{g_i}\mathcal{T}^\magn \psi (z,w) 
   & = \left(\frac{\magn}{\pi}\right)^{3/2} \int_{\C}  e^{-\magn\left(z+iw-\xi\right)\left(z-iw-\overline{\xi}\right)}  \psi(\xi) d\lambda(\xi)
  \\ & = \left(\frac{\magn}{\pi}\right)^{3/2} \int_{\C}   \rho_0^\magn\left(\Re\xi - z\right)\rho_0^\magn\left(\Im\xi -w\right)  \psi(\xi) d\lambda(\xi).
\end{align*}
which is noting but the integral transform  $\mathcal{B}^{2,\magn}$ given by \eqref{2dSBT}. 
To prove \eqref{ConnIntTransf1Wigner}, we rewrite $\mathcal{T}^\magn \mathcal{W}^\nu$ as
\begin{align*}
\mathcal{T}^\magn (\mathcal{W}^\nu (f))(z,w)&= \left(\frac{\magn}{\pi}\right)^{3/2} \int_{\C}  e^{-\magn\left(z+iw-\xi\right)\left(z-iw-\overline{\xi}\right)}  \mathcal{W}^\nu (f) (\xi) d\lambda(\xi)
\\&= \left(\frac{\magn}{\pi}\right)^{3/2} \int_{\C}  e^{-\magn x^2 +\magn(z+w)x -\magn(z-iy)(w+iy) }  \mathcal{W}^\nu (f) (\xi) d\lambda(\xi)
\end{align*}
with $\xi =x+iy\sim(x,y)$. By the definition \eqref{Wignerdf} of $\mathcal{W}^\nu$ and the Gaussian integral formula, we get
\begin{align*}
\mathcal{T}^\magn (\mathcal{W}^\nu (f))(z,w)
&= \left( \frac{1}{2\magn}\right)^{1/2}  \left(\frac{\magn}{\pi}\right)^{3/2}
 \int_{\R^2}  f\left(y-\frac{t}{2},y+\frac{t}{2}\right) e^{-\magn (z-iy)(w+iy) +\frac{\magn}{4}(z+w-it)^2 }   dydt
\\&= \left( \frac{1}{2\magn}\right)^{1/2}  \left(\frac{\magn}{\pi}\right)^{3/2}  e^{\frac{\magn}{4}(z+w)^2}
 \int_{\C}  f(\xi) e^{-\frac{\magn}{2} (-iz-w-\xi)(-iz+w-\bxi)} d\lambda(\xi)
 \\&= \left( \frac{1}{2\magn}\right)^{1/2}  e^{\frac{\magn}{4}(z+w)^2}
 \mathcal{T}^{\frac{\magn}2} (f)(-iz-w,-iz+w)
\end{align*}
thanks to the change of variables $X=y+\frac{t}{2}$ and $Y=y-\frac{t}{2}$ and the key observation that
\begin{align*}
e^{-\magn \left(z-i\frac{X+Y}{2}\right)\left(w+i\frac{X+Y}{2}\right) +\frac{\magn}{4}(z+w-i(X-Y))^2 }
=  e^{\frac{\magn}{4}(z+w)^2} e^{-\frac{\magn}{2} (-iz-w-\xi)(-iz+w-\bxi)}.
\end{align*}
To conclude, it suffices to see that $ \mathcal{T}^{\frac{\magn}2} (f)(-iz-w,-iz+w) =  \Gamma_{-ig_{i}}\mathcal{T}^{\frac{\magn}2} (f)(z,w)$, where
$g_{i}$ is the matrix in \eqref{gi}.
\end{proof}

\begin{proof}[of Theorem \ref{thm:IntTransf3}]
We apply the coherent states formalism described in Section \ref{s2}. Indeed, the kernel function in the integral transform $\mathcal{T}^\magn_{n,n'}$ defined by \eqref{IntTransf3},
  \begin{align*}
\mathcal{T}^\magn_{n,n'} (\psi)(z) := \left(\frac{(-1)^{n'} \magn}{\pi \sqrt{ n! n'! \magn^{n+n'} }}\right)
\int_{\C} e^{-\magn|\xi|^2 + \magn \bxi z }  H_{n,n'}^\magn(\xi-z,\bxi-\bz)   \psi(\xi)  d\lambda(\xi)
  \end{align*}
is in fact the exponential generating function involving the product of $H_{m,n}^{\magn }$ and $H_{m,n'}^{\magn }$, which for varying $m$, are special orthogonal bases of the generalized Bargmann-Fock spaces $\mathcal{F}^{2,\magn}_n(\C)$ and $\mathcal{F}^{2,\magn}_{n'}(\C)$, respectively. More precisely, we make use of Theorem \ref{thm:bilgenfct1}.
\end{proof}

\begin{proof}[of Theorem \ref{thm:bilgenfct1}] Notice for instance that it is easy to see that the Rodrigues' formula \eqref{gchpmu} infers
\begin{eqnarray} \label{vgchpmu}
H_{m,n}^\magn  (z,\bz )
=  (-1)^{m} \magn^{n} e^{\magn  z\bz }\dfrac{\partial ^{m}}{\partial \bz^{m}}  \left( \bz^{n} e^{-\magn  z \bz }\right).
 \end{eqnarray}
Subsequently, we can check that
\begin{eqnarray} \label{vgchpmuzw}
 H_{m,m'}^{\magn}( z - \xi , \bz - \overline{\xi}) =  (-1)^{m} e^{\magn  | z - \xi|^2 }  \dfrac{\partial ^{m}}{\partial \bz^{m}} \left( \magn^{m'} (\bz-\overline{\xi})^{m'}  e^{-\magn  | z - \xi|^2 }  \right) .
  \end{eqnarray}
Using successively the variant \eqref{vgchpmu} of the Rodrigues' formula as well as the generating function \eqref{GenFctm}, one gets
\begin{align*}
G_{m,m'}^\magn(t;z,w)
& = \sum\limits_{n=0}^{+\infty}\frac{t^n}{\magn^n n!} \left[   (-1)^{m} \magn^{n} e^{\magn  z\bz }\dfrac{\partial ^{m}}{\partial \bz^{m}}  \left( \bz^{n} e^{-\magn  z \bz }\right) \right] H_{n,m'}^\magn (w,\bar w)
\\ & =  (-1)^{m} e^{\magn  z\bz }  \dfrac{\partial ^{m}}{\partial \bz^{m}}
\left[ \magn^{m'} (\bw -t\bz)^{m'} e^{\magn t \bz w}  e^{-\magn  z \bz } \right].
\end{align*}
Now, if $t$ is assumed to belong to the unit circle, the above identity can be rewritten as
\begin{align*}
G_{m,m'}^\magn(t;z,w)
 =  (-t)^{m'}e^{\magn  z\bz }  e^{-\magn \overline{t}\bw ( z - t w) }
          (-1)^{m} \dfrac{\partial ^{m}}{\partial \bz^{m}} \left[ \magn^{m'} (\bz-\overline{t}\bw)^{m'}  e^{-\magn  | z - t w|^2 }  \right].
\end{align*}
In the right-hand side of the previous equality we recognize \eqref{vgchpmuzw}. Thus, the expression of $G_{m,m'}^\magn(t;z,w)$ reduces further to
the desired result \eqref{genfct1hh}.
\end{proof}

\section{Concluding remarks}

We have investigated some new integral transforms using the coherent states transform formalism. The first one connects $\Lnuc$ to the two-dimensional Bargmann-Fock space $\mathcal{F}^{2,\magn}(\C^2)$. This is the non-trivial version of the two-dimensional Segal-Bargmann transform.  The second transform connects any two generalized Bargmann-Fock spaces $\mathcal{F}_n^{2,\magn}(\C)$. These spaces are the $L^2$-eigenspaces of the magnetic Schr\"odinger operator $\Delta_\magn$ acting on $\Lnuc$. The obtained generating functions \eqref{thm:bilgenfct3HnHmn},
\eqref{genFctgHermite} and \eqref{genfct1hh}, for the UCHP,  have played a crucial role in obtaining these transforms.
As a consequence of Theorem \ref{thm:bilgenfct1}, we get
\begin{eqnarray} \label{genLag}
\sum\limits_{n=0}^{+\infty} \frac{ t^n }{n! \magn ^n  }  H_{m,n}^{\magn }(z,\bz ) H_{n,m}^{\magn }(w,\bw )
= (\magn t)^{m} m!   L^{(0)}_m(\magn |z -tw|^2)  e^{\magn  t w\bz }
\end{eqnarray}
which follows readily by taking $m=m'$ in \eqref{genfct1hh} and using $ H_{m,m}^{\magn }(\xi,\bar\xi) =  (-\magn )^m m! L^{(0)}_m(\magn |\xi|^2)$, where
$L^{(\gamma)}_m(x)$ denotes the Laguerre polynomials. The particular case of $z=w$ yields the identity
 \begin{eqnarray} \label{prob}
\sum\limits_{n=0}^{+\infty} \frac{ t^n }{n! \magn ^n  }  |H_{m,n}^{\magn }(z,\bz )|^2  &=
    m! (\magn t)^{m}   L^{(0)}_m(\magn |1 - t|^2 |z|^2)  e^{\magn  t |z|^2 }
\end{eqnarray}
for every $t$ in the unit circle and $z\in \C$. More particularly, we have
 \begin{eqnarray} \label{prob1}
\sum\limits_{n=0}^{+\infty} \frac{|H_{m,n}^{\magn }(z,\bz )|^2  }{n! \magn ^n  }   &=   m! \magn ^{m}  e^{\magn |z|^2 }.
\end{eqnarray}

 Using similar arguments as the ones used here, we are able to establish the following generating function involving the product $u^m t^n H_{m,n}^{\magn }(z,\bz ) H_{n,m'}^{\magn }(w,\bw )$.

\begin{theorem}\label{thm:bilgenfct2} For every $t$ in the unit circle, $|t|=1$, and complex numbers $u,z,w\in \C$ such that $\magn  |u|<1$, we have
\begin{eqnarray}\label{BilGen1}
\sum\limits_{m,n=0}^{+\infty} \frac{u^m t^n }{ m!n! \magn^n }  H_{m,n}^{\magn }(z,\bz ) H_{n,m'}^{\magn }(w,\bw )
&= (-\magn t)^{m'} ( \bz - \overline{t}\bw - u )^{m'} e^{\magn t \bz w + \magn u( z-t w) }.
\end{eqnarray}
\end{theorem}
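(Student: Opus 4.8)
The plan is to treat the double series in \eqref{BilGen1} as an iterated sum and to build directly on the Mehler-type formula \eqref{genfct1hh} of Theorem~\ref{thm:bilgenfct1}, in which the inner summation over $n$ has already been carried out. Fix $z,w\in\C$, an integer $m'$, and $t$ on the unit circle. First I would check that, under the hypothesis $\magn|u|<1$, the double series converges absolutely, so that the order of summation may be reversed. It suffices to estimate $\sum_{m,n}\frac{|u|^m}{m!\,n!\,\magn^n}|H_{m,n}^{\magn}(z,\bz)|\,|H_{n,m'}^{\magn}(w,\bw)|$; applying Cauchy--Schwarz in the index $n$ and invoking the square-sum identity \eqref{prob1}, namely $\sum_{n}\frac{|H_{m,n}^{\magn}(z,\bz)|^2}{n!\,\magn^n}=m!\,\magn^m e^{\magn|z|^2}$ together with its analogue for $H_{n,m'}^{\magn}(w,\bw)$ (obtained from \eqref{prob1} via the symmetry $\overline{H_{m,n}^{\magn}}=H_{n,m}^{\magn}$), collapses the $n$-sum and leaves a convergent series of the type $\sum_m (|u|\sqrt{\magn})^m/\sqrt{m!}$.

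With the interchange justified, I would perform the inner sum over $n$ by a direct application of \eqref{genfct1hh}, which gives
\[
\sum_{m,n=0}^{+\infty}\frac{u^m t^n}{m!\,n!\,\magn^n}H_{m,n}^{\magn}(z,\bz)H_{n,m'}^{\magn}(w,\bw)
=(-t)^{m'}e^{\magn t w\bz}\sum_{m=0}^{+\infty}\frac{u^m}{m!}H_{m,m'}^{\magn}(z-tw,\bz-\overline{t}\bw).
\]
The remaining sum is a generating function in the \emph{first} index of the UCHP, precisely the elementary generating function \eqref{GenFctm} already exploited in the proof of Theorem~\ref{thm:bilgenfct1}; applied with generating variable $u$ and arguments $(z-tw,\bz-\overline{t}\bw)$ it evaluates to $\magn^{m'}(\bz-\overline{t}\bw-u)^{m'}e^{\magn u(z-tw)}$. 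Collecting the prefactors then yields
\[
(-t)^{m'}\magn^{m'}(\bz-\overline{t}\bw-u)^{m'}e^{\magn t w\bz}\,e^{\magn u(z-tw)}
=(-\magn t)^{m'}(\bz-\overline{t}\bw-u)^{m'}e^{\magn t\bz w+\magn u(z-tw)},
\]
which is exactly the right-hand side of \eqref{BilGen1} after recombining the two exponentials.

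I expect the only genuinely delicate point to be the justification of the interchange of summations: the crude pointwise bound $|H_{m,n}^{\magn}(z,\bz)|\le\sqrt{m!\,n!}\,\magn^{(m+n)/2}e^{\magn|z|^2/2}$ is too lossy to control the series term-by-term along the diagonal $m\approx n$, so it is essential to sum in $n$ as a block via \eqref{prob1} before summing in $m$, and this is where the hypothesis $\magn|u|<1$ is invoked. An equivalent route, mimicking the proof of Theorem~\ref{thm:bilgenfct1} more literally, would instead substitute the Rodrigues-type identity \eqref{vgchpmu} for $H_{m,n}^{\magn}(z,\bz)$, recognize $\sum_m\frac{(-u)^m}{m!}\partial_{\bz}^m$ as the shift operator $\bz\mapsto\bz-u$, perform the inner $n$-sum by \eqref{GenFctm}, and read off the same closed form; I would keep the first route as the primary argument since it reuses \eqref{genfct1hh} verbatim.
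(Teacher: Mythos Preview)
Your argument is correct and coincides with the second of the two routes the paper gives: the paper's proof says the identity ``follows by twice application'' of \eqref{GenFctm} and ``can also be obtained easily by combining \eqref{genfct1hh} and \eqref{GenFctm}''---you have carried out exactly this latter combination, first summing in $n$ via \eqref{genfct1hh} and then in $m$ via \eqref{GenFctm}. The paper's primary route differs only in that it sums in $m$ first using \eqref{GenFctm}, obtaining $\sum_n \frac{(t(\bz-u))^n}{n!}H_{n,m'}^{\magn}(w,\bw)\,e^{\magn u z}$, and then applies \eqref{GenFctm} a second time in $n$; this avoids any appeal to Theorem~\ref{thm:bilgenfct1}. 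Your added absolute-convergence step via Cauchy--Schwarz and \eqref{prob1} is a welcome rigor the paper omits (and in fact your estimate $\sum_m (|u|\sqrt{\magn})^m/\sqrt{m!}<\infty$ shows convergence for \emph{all} $u$, suggesting the hypothesis $\magn|u|<1$ is not sharp).
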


\begin{proof}  {The identity \eqref{BilGen1} follows by twice application} of the generating function \cite[Proposition 3.4 (b)]{Gh13ITSF}
         \begin{eqnarray} \label{GenFctm}
         \sum\limits_{k=0}^{+\infty}\frac{u^k}{k!} H^\magn_{k,n}(z,\bz ) = \magn^{n} (\bz-u)^n  e^{\magn u z} .
        \end{eqnarray}
 It can also be obtained easily by combining \eqref{genfct1hh} and \eqref{GenFctm}.
\end{proof}

The closed expression we obtain  is new and not to be confused with the one obtained in \cite{IsmailZhang2017}
for the non-symmetry in the indices and the fact that $t$ belongs to the unit circle.
The same observation holds true for Theorem \ref{thm:bilgenfct1}.

Most of the obtained results in the framework of the UCHP can be rederived making use of rescaled version of the integral representation of the UCHP,
including the exponential generating function \eqref{genFctgHermite} as well as the generating function \eqref{GenFctm}.

\begin{theorem} For the scalar parameters $\mu>0$ and $\alpha,\beta\in \C$ such that $\alpha\beta >0$ and $\magn = \frac{\alpha\beta}{\gauss }$, we have
  \begin{eqnarray}\label{intHermite}
  H_{m,n}^{\magn}(z;\bz) = \left(\frac{\gauss}{\pi}\right)  (-\alpha)^m(\beta)^n
  \int_{\C} \xi^m \overline{\xi}^n e^{\frac{\alpha\beta}{\gauss} |z|^2 -\gauss|\xi|^2 +\alpha \xi \bz - \beta \bxi z} d\lambda(\xi).
  \end{eqnarray}
\end{theorem}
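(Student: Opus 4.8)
The plan is to establish \eqref{intHermite} by comparing the bivariate generating functions, in two auxiliary variables $u,v$, of its two sides, reducing everything to the exponential generating function \eqref{genFctgHermite} of the UCHP together with a single complex Gaussian integral.

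First I would multiply the right-hand side of \eqref{intHermite} by $u^m v^n/(m!\,n!)$ and sum over $m,n\geq 0$. Since the Gaussian weight $e^{-\gauss|\xi|^2}$ with $\gauss>0$ guarantees absolute convergence, and the dependence on $(u,v)$ is entire, one may interchange the summation with the integration; the double sum then collapses via $\sum_{m,n}\frac{(-\alpha u\xi)^m(\beta v\bxi)^n}{m!\,n!}=e^{-\alpha u\xi+\beta v\bxi}$. This leaves the single complex Gaussian integral
\begin{align*}
\frac{\gauss}{\pi}\,e^{\frac{\alpha\beta}{\gauss}|z|^2}\int_{\C} e^{-\gauss|\xi|^2+\alpha\xi(\bz-u)-\beta\bxi(z-v)}\,d\lambda(\xi).
\end{align*}

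Next I would evaluate this using the standard formula $\int_{\C} e^{-\gauss|\xi|^2+a\xi+b\bxi}\,d\lambda(\xi)=(\pi/\gauss)\,e^{ab/\gauss}$, valid for $\gauss>0$ and all $a,b\in\C$ (split $\xi=x+iy$ into two one-dimensional real Gaussian integrals), applied with $a=\alpha(\bz-u)$ and $b=-\beta(z-v)$. The prefactor $\gauss/\pi$ cancels the resulting $\pi/\gauss$, and the exponent becomes $\frac{\alpha\beta}{\gauss}\big(|z|^2-(\bz-u)(z-v)\big)$. A short expansion gives $|z|^2-(\bz-u)(z-v)=uz+v\bz-uv$, so, on recalling $\magn=\alpha\beta/\gauss$, the whole generating function reduces to $e^{\magn(uz+v\bz-uv)}$, which is precisely the left-hand side of \eqref{genFctgHermite}. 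Identifying the Taylor coefficients of $u^m v^n$ on the two sides then yields \eqref{intHermite}.

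The individual steps are elementary, so I do not anticipate a genuine obstacle; the only points demanding care are the justification of the term-by-term integration (secured by the $e^{-\gauss|\xi|^2}$ decay together with uniform convergence on compact $(u,v)$-sets) and the algebraic simplification of the exponent. The two hypotheses are exactly what make the argument run: $\gauss>0$ ensures convergence of the Gaussian integral, while $\alpha\beta>0$ forces $\magn=\alpha\beta/\gauss>0$, so that the UCHP $H_{m,n}^{\magn}$ and the generating function \eqref{genFctgHermite} are indeed the objects appearing on both sides.
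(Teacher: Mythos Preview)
Your argument is correct, but it follows a different route from the paper's own proof. The paper proceeds directly from the complex Gaussian identity
\[
\int_{\C} e^{-\gauss|\xi|^2+\alpha\xi\bz-\beta\bxi z}\,d\lambda(\xi)=\left(\frac{\pi}{\gauss}\right)e^{-\frac{\alpha\beta}{\gauss}|z|^2},
\]
observes that the integral converges uniformly in $z$ on every disc, and then differentiates both sides $m$ times in $\bz$ and $n$ times in $z$ under the integral sign. On the right-hand side the Rodrigues formula \eqref{gchpmu} with $\magn=\alpha\beta/\gauss$ produces $H_{m,n}^{\magn}(z,\bz)$ (times the factor $e^{\frac{\alpha\beta}{\gauss}|z|^2}$ that is absorbed in the statement), while on the left each differentiation brings down a monomial factor $\alpha\xi$ or $-\beta\bxi$, yielding the integrand of \eqref{intHermite}.

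Your approach inverts the logic: instead of differentiating the Gaussian identity and invoking Rodrigues, you sum against $u^m v^n/(m!\,n!)$, evaluate the same Gaussian integral once, and then appeal to the exponential generating function \eqref{genFctgHermite} to identify coefficients. Both proofs rest on the very same Gaussian integral; the paper's version is self-contained (it does not presuppose \eqref{genFctgHermite} and in fact the surrounding text advertises \eqref{intHermite} as a tool to \emph{rederive} \eqref{genFctgHermite}), whereas yours is analytically softer, trading differentiation under the integral for a dominated-convergence/Fubini step. The paper explicitly notes in the remark following the theorem that the special case $\gauss=1$, $\alpha=-\beta=i$ was obtained by Ismail via the exponential generating function, which is essentially your method.
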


\begin{proof}
We present here a direct proof.
We use the integral representation of the Gaussian function $e^{-\frac{\alpha\beta}{\gauss }|z|^2}$,
  \begin{eqnarray}\label{intGauss}
  \int_{\C} e^{-\gauss |\xi|^2 +\alpha \xi\overline{z} - \beta \overline{\xi}z} d\lambda(\xi)
  = \left(\frac{\pi}{\gauss }\right)  e^{-\frac{\alpha\beta}{\gauss }|z|^2}.
  \end{eqnarray}
 The integral involved in left-hand side of \eqref{intGauss}
   converges uniformly in $z$ on every disc $D(0,r)$ of $\C$. Thus, by differentiate repeatedly both sides of \eqref{intGauss}, with respect to $z$ and $\bz$, we obtain the integral representation for the $  H_{m,n}^{\magn}(z;\bz)$ given by \eqref{intHermite}.
\end{proof}

\begin{remark}\label{rem:pcase}
The particular case of $\alpha=-\beta=i$ in the identity \eqref{intGauss} reads simply
  \begin{eqnarray}\label{intGaussclass}
  e^{-\frac{|z|^2}{\gauss}} = \left(\frac{\gauss}{\pi} \right) \int_{\C} e^{-\gauss|\xi|^2 + 2i\Re \xi \bz } d\lambda(\xi) ,
  \end{eqnarray}
and leads to the well-known fact that the Fourier transform reproduces the Gaussian function. By considering for example $\gauss=1$ and $\alpha =-\beta=i$,
the integral representation \eqref{intHermite} reduces further to the one
obtained in \cite[Theorem 5.1]{IsmailTrans2016} making use of the exponential generating function \eqref{genFctgHermite}.
\end{remark}

\noindent {\bf Acknowledgment:}

The authors 
acknowledge their indebtedness to Professor Ahmed Sebbar for reading the paper and for their precious remarks on it.
The activate members of "Intissar seminar" are gratefully acknowledged.


\begin{thebibliography}{9}
\bibitem{Abreu2010} {L.D. Abreu},
        {\it Sampling and interpolation in Bargmann-Fock spaces of polyanalytic functions}.
        Appl. Comput. Harmon. Anal. 29 (2010), no. 3, 287--302.
\bibitem{AbreuGossonMouayn2015}
{L.D. Abreu, P. Balazs, M. de Gosson, Z. Mouayn},
       {\it Discrete coherent states for higher Landau levels}.
       Ann. Physics 363 (2015) 337--353.
\bibitem{ABEG2015}
{F. Agorram, A. Benkhadra, A. El Hamyani, A. Ghanmi},
                 {\it Complex Hermite functions as Fourier-Wigner transform}.
                  Integral Transforms Spec. Funct.  27  (2016), no. 2, 94-100.
\bibitem{Bargmann1961}
{V. Bargmann},
        {\it On a Hilbert space of analytic functions and an associated integral transform}.
        Comm. Pure Appl. Math. 14 (1961) 187-214.
\bibitem{DG2018b}
{K. Diki, A. Ghanmi},
         {\it  Special integral transforms in the framework of holomorphic and the sliced hyperholomorphic Bargmann-Fock spaces}.
\bibitem{ElfardiGIS2018}
{A. El Fardi, A. Ghanmi, L. Imlal, M. Souid El Ainin},
        Analytic and arithmetic properties of the $(\Gamma,\chi)$-automorphic reproducing kernel function and associated Hermite-Gauss series.
        To appear in Ramanujan journal.
\bibitem{Folland1989}
{G.B. Folland},
       {\it Harmonic analysis in phase space}.
            Annals of Mathematics Studies, 122.
            Princeton University Press, Princeton, NJ; 1989.
\bibitem{Gazeau2009}
{J.-P. Gazeau},
    {\it Coherent states in quantum physics}.
     WILEY-VCH Verlag GmbH $\&$ Co. KGaA Weinheim 2009.
\bibitem{Gh08JMAA}
{ A. Ghanmi},
        {\it A class of generalized complex Hermite polynomials}.
        { J. Math. Anal. App.}  340 (2008), 1395-1406.
\bibitem{Gh13ITSF}
{A. Ghanmi},
         {\it Operational formulae for the complex Hermite polynomials $H_{p,q}(z, \bar z)$}.
        {Integral Transforms Spec. Funct.},  Volume 24, Issue 11 (2013) pp  884-895.  
\bibitem{Gh2017Mehler}
{A. Ghanmi},
         {\it Mehler's formulas for the univariate complex Hermite polynomials and applications}.
         To appear in Math. Methods Appl. Sci.
\bibitem{GrossMalliavin1996}
{L. Gross, P. Malliavin},
      {\it Hall's transform and the Segal-Bargmann map, It\^o´s stochastic calculus
      and probability}. ed. N. Ikeda et al., Springer, Berlin, 1996, 78--116.
\bibitem{Hall1994}
{B.C. Hall},
         {\it The Segal-Bargmann "coherent-state" transform for Lie groups}.
          J. Funct. Anal. 122 (1994) 103--151.
\bibitem{IsmailTrans2016}
{M.E.H. Ismail},
                             {\it Analytic properties of complex Hermite polynomials}.
                             Trans. Amer. Math. Soc. 368 (2016), no. 2, 1189-1210.
\bibitem{IsmailZhang2017}
{M.E.H. Ismail,  R. Zhang},
        A review of multivariate orthogonal polynomials.
        J. Egyptian Math. Soc. 25 (2017), no. 2, 91--110.
\bibitem{Shun-LongLuo1998}
{Shun-Long Luo},
        {\it On the Bargmann transform and the wigner transform}.
         Bull. London Math. Soc. 30 (1998) 413--418.
\bibitem{Mouayn2011}
{Z. Mouayn},
       {\it Coherent state transforms attached to generalized Bargmann spaces on the complex plane}.
       Math. Nachr. 284 (2011) no. 14-15, 1948--1954.
\bibitem{MouraoNQ2017}
{J. Mourao, J.P. Nunes,  T. Qian},
           {\it Coherent State Transforms and the Weyl Equation in Clifford Analysis}.
          J. Math. Phys. 58 (2017), no. 1, 013503, 12
\bibitem{Neretin1972}
{Y.A. Neretin},
         {\it  Lectures on Gaussian integral operators and classical groups}.
          EMS Series of Lectures in Mathematics. European Mathematical Society (EMS), Z\"urich, 2011.
\bibitem{Pool1966}
{J.C.T. Pool},
      {\it Mathematical aspect of Weyl correspondence}.
      J. Math. Phys. 7 (1966) 66--76.
\bibitem{Rainville71}
{E.D. Rainville},
        {\it Special functions},
        Chelsea Publishing Co., Bronx, N.Y., (1960).
\bibitem{Segal62}
{I. Segal},
        {\it Mathematical characterization of the physical vacuum for a linear Bose-Einstein field}.
         Illinois J. Math. 6 (1962) 500--523.
\bibitem{Thangavelu1998}
{S. Thangavelu},
          {\it Harmonic analysis on the Heisenberg group}.
          Progress in Mathematics, 159. Birkh\"auser Boston, Inc., Boston, MA, 1998.
\bibitem{Vasilevski2000}
{ N.L. Vasilevski},
               {\it Poly-Fock spaces}.
                Oper. Theory, Adv. App. 117 (2000)  371--386.
\end{thebibliography}
\end{document}